\theoremstyle{definition}
\newtheorem{thm}{Theorem}[subsection]
\newtheorem*{thm*}{Theorem}
\newtheorem{defi}[thm]{Definition}
\newtheorem*{defi*}{Definition}
\newtheorem*{acknowledge}{Acknowledgement}
\newtheorem{cor}[thm]{Corollary}
\newtheorem*{cor*}{Corollary}
\newtheorem{prop}[thm]{Proposition}
\newtheorem*{prop*}{Proposition}
\newtheorem{lem}[thm]{Lemma}
\newtheorem*{lem*}{Lemma}
\newtheorem{ex}[thm]{Example}
\newtheorem*{ex*}{Example}
\newtheorem{rem}[thm]{Remark}
\newtheorem*{rem*}{Remark}
\newtheorem*{hw*}{Homework}
\newcommand{\Z}{\mathbb{Z}}
\newcommand{\N}{\mathbb{N}}
\newcommand{\T}{\mathbb{T}}
\newcommand{\tight}{\mathrm{tight}}
\DeclareMathOperator{\dom}{\mathrm{dom}}
\def\i<#1>{\langle #1 \rangle}
\def\l<#1>{\left\langle #1 \right\rangle}
\renewenvironment{proof}[1][\proofname]{\par
  \normalfont
  \topsep6\p@\@plus6\p@ \trivlist
  \item[\hskip\labelsep{\bfseries #1}\@addpunct{.}]\ignorespaces
}{%
  \endtrivlist
}
\renewcommand{\proofname}{\sc{Proof}}
\newcommand*{\defeq}{\mathrel{\rlap{%
                     \raisebox{0.3ex}{$\m@th\cdot$}}%
                     \raisebox{-0.3ex}{$\m@th\cdot$}}%
                     =}
\title[]{A correspondence between inverse subsemigroups, open wide subgroupoids and Cartan intermediate C*-subalgebras}
\author{Fuyuta Komura}
\address{Department of Mathematics, Faculty of Science and Technology, Keio University,
	3-14-1 Hiyoshi, Kohoku-ku, Yokohama, 223-8522, Japan}
\address{ Mathematical Science Team
Center for Advanced Intelligence Project (AIP)
RIKEN}
\address{Phone: +81-45-566-1641+42706}
\address{Fax: +81-45-566-1642}
\email{fuyuta.k@keio.jp}
\subjclass[2020]{20M18, 22A22, 46L05}
\begin{document}
\maketitle
\begin{abstract}

For a given inverse semigroup action on a topological space,
one can associate an \'etale groupoid. 
We prove that there exists a correspondence between the certain subsemigroups and the open wide subgroupoids in case that the action is strongly tight.
Combining with the recent result of Brown et.\ al,
we obtain a correspondence between the certain subsemigroups of an inverse semigroup and the Cartan intermediate subalgebras of a groupoid C*-algebra. 
\end{abstract}

\section{Introduction}

Given an action of an inverse semigroup on a topological space,
one can associate an \'etale groupoid by taking a germ.
For a given \'etale groupoid,
we can construct groupoid C*-algebras, which are initiated by Renault \cite{renault1980groupoid}.
It is a natural task to investigate the relation among them and actually many researchers have been doing this.
In this paper,
we establish a correspondence between the set of certain subsemigroups and the set of wide open wide subgroupoids of the associated groupoids.
We consider inverse semigroups acting on topological spaces in the ``strongly tight'' way (see Definition \ref{definition of strongly tight}).
Our main theorem, Theorem \ref{main theorem}, states that wide open subgroupoids of associated groupoids with strongly tight actions corresponds to certain subsemigroups of the inverse semigroups.
Combining with the work in \cite{Brown:2019aa},
we obtain a correspondence between Cartan intermediate subalgebras in groupoid C*-algebras and certain subsemigroups of inverse semigroups.
As an application,
we compute all Cartan intermediate subalgebras of the Cuntz algebras which contains the fixed point algebras. 

This paper is organized as follows.
Section 1 is devoted for preliminaries.
In Section 2,
we investigate open subgroupoids of \'etale groupoids associated to strongly tight actions.
Then we establish a correspondence between open wide subgroupoids and certain subsemigroups (Theorem \ref{main theorem}).

In Section 3,
we give applications of our correspondence.
The first application is regarding with inverse semigroups which consist of compact bisections of \'etale groupoids.
We show that a class of open wide subgroupoids of an ample groupoid is described by an inverse semigroup of compact bisections (Corollary \ref{Corollary about inverse semigroups of compact bisections}).
As the second application,
we study certain subsemigroups of the polycyclic monoids.
This study is applied to the computation of Cartan intermediate subalgebras between the Cuntz algebras and the fixed point algebras.

In Section 4,
we summarize the relation between Cartan intermediate subalgebras of C*-algebras and certain subsemigroups of inverse semigroups.
Then we compute Cartan intermediate subalgebras of the Cuntz algebras which contains the fixed point algebras.

In Section 5,
we mention the relation between strongly tight actions and tight groupoids.
We give a characterization of a tight groupoid with the compact unit space in Corollary \ref{cor about compact strongly tight action}.

\begin{acknowledge}
	The author would like to thank Prof.\ Takeshi Katsura for his support and encouragement.
	This work was supported by JSPS KAKENHI 20J10088.
\end{acknowledge}

\section{Preliminaries}

\subsection{Inverse semigroups}

We recall the basic notions about inverse semigroups.
See \cite{lawson1998inverse} or \cite{paterson2012groupoids} for more details.
An inverse semigroup $S$ is a semigroup where for every $s\in S$ there exists a unique $s^*\in S$ such that $s=ss^*s$ and $s^*=s^*ss^*$.
We denote the set of all idempotents in $S$ by $E(S)\defeq\{e\in S\mid e^2=2\}$.
It is known that $E(S)$ is a commutative subsemigroup of $S$.
An inverse semigroup which consists of idempotents is called a (meet) semilattice.
A zero element is a unique element $0\in S$ such that $0s=s0=0$ holds for all $s\in S$.
A unit is a unique element $1\in S$ such that $1s=s1=s$ holds for all $s\in S$.
In this paper, \textbf{we assume that every inverse semigroup always has a zero element},
although it does not necessarily have a unit.
An inverse semigroup with a unit is called an inverse monoid.
By a subsemigroup of $S$,
we mean a subset of $S$ which is closed under the product and inverse of $S$.
For $s,t\in S$,
we write $s\leq t$ if $ts^*s=s$ holds.
Then this defines a partial order on $S$.
Note that $e\leq f$ holds if and only if $ef=e$ holds for $e,f\in E(S)$.
A pair $s,t\in S$ is said to be compatible if $s^*t, st^*\in E(S)$ holds.
Notice that $s,t$ are compatible if there exists $u\in S$ such that $s,t\leq u$.
A subsemigroup of an inverse semigroup $S$ is said to be wide if it contains $E(S)$.
A subset $I\subset E(S)$ is called an ideal if $e\in I$ and $f\leq e$ implies $f\in I$.
A subset $C\subset I$ of an ideal $I\subset E(S)$ is called a cover if for every $e\in I\setminus \{0\}$ there exists $c\in C$ such that $ec\not=0$.

For a topological space $X$,
we denote by $I_X$ the set of all homeomorphisms between open sets in $X$.
Then $I_X$ is an inverse semigroup with respect to the product defined by the composition of maps.
For $f,g\in I_X$,
note that $f\leq g$ holds if and only if $\dom f\subset \dom g$ and $f(x)=g(x)$ hold for all $x\in \dom f$.

\subsection{\'Etale groupoids}

We recall the basic notions on \'etale groupoids.
See \cite{asims} and \cite{paterson2012groupoids} for more details.

A groupoid is a set $G$ together with a distinguished subset $G^{(0)}\subset G$,
 source and range maps $d,r\colon G\to G^{(0)}$ and a multiplication 
\[
G^{(2)}\defeq \{(\alpha,\beta)\in G\times G\mid d(\alpha)=r(\beta)\}\ni (\alpha,\beta)\mapsto \alpha\beta \in G
\]
such that
\begin{enumerate}
	\item for all $x\in G^{(0)}$, $d(x)=x$ and $r(x)=x$ hold,
	\item for all $\alpha\in G$, $\alpha d(\alpha)=r(\alpha)\alpha=\alpha$ holds,
	\item for all $(\alpha,\beta)\in G^{(2)}$, $d(\alpha\beta)=d(\beta)$ and $r(\alpha\beta)=r(\alpha)$ hold,
	\item if $(\alpha,\beta),(\beta,\gamma)\in G^{(2)}$,
	we have $(\alpha\beta)\gamma=\alpha(\beta\gamma)$,
	\item\label{inverse} every $\gamma \in G$,
	there exists $\gamma'\in G$ which satisfies $(\gamma',\gamma), (\gamma,\gamma')\in G^{(2)}$ and $d(\gamma)=\gamma'\gamma$ and $r(\gamma)=\gamma\gamma'$.   
\end{enumerate}
Since the element $\gamma'$ in (\ref{inverse}) is uniquely determined by $\gamma$,
$\gamma'$ is called the inverse of $\gamma$ and denoted by $\gamma^{-1}$.
We call $G^{(0)}$ the unit space of $G$.
A subgroupoid of $G$ is a subset of $G$ which is closed under the inversion and multiplication. 
A subgroupoid of $G$ is said to be wide if it contains $G^{(0)}$.


A topological groupoid is a groupoid equipped with a topology where the multiplication and the inverse are continuous.
A topological groupoid is said to be \'etale if the source map is a local homeomorphism.
Note that the range map of an \'etale groupoid is also a local homeomorphism.
An \'etale groupoid is said to be ample if it has an open basis which consists of compact sets.
In this paper,
we mainly treat ample groupoids.

A topological groupoid $G$ is said to be topologically principal if the set
\[
\{x\in G^{(0)}\mid G(x)=\{x\}\}
\]
is dense in $G^{(0)}$,
where $G(x)$ is the isotropy group at $x\in G^{(0)}$ :
\[
G(x)\defeq \{\alpha\in G\mid d(\alpha)=r(\alpha)=x\}.
\]
\subsection{\'Etale groupoids associated to inverse semigroup actions}

An \'etale groupoid arises from an action of an inverse semigroup to a topological space.
We recall how to construct an \'etale groupoid from an inverse semigroup action. 
We begin with the definition of an inverse semigroup action.

Let $X$ be a topological space.
Recall that $I_X$ is an inverse semigroup of homeomorphisms between open sets in $X$.
An action $\alpha\colon S\curvearrowright X$ is a semigroup homomorphism $S\ni s\mapsto \alpha_s\in I_X$.
In this paper, we always assume that every action $\alpha$ satisfies $\bigcup_{e\in E(S)}\dom(\alpha_e)=X$ and $\dom(\alpha_0)=\emptyset$. 
For $e\in E(S)$, we denote the domain of $\alpha_e$ by $D_e^{\alpha}$.
Then $\alpha_s$ is a homeomorphism from $D_{s^*s}^{\alpha}$ to $D_{ss^*}^{\alpha}$.
We often omit $\alpha$ of $D_{e}^{\alpha}$ if there is no chance to confuse.

For an action $\alpha\colon S\curvearrowright X$,
we associate an \'etale groupoid $S\ltimes_{\alpha}X$ as the following.
First we put the set $S*X\defeq \{(s,x) \in S\times X \mid x\in D^{\alpha}_{s^*s}\}$.
Then we define an equivalence relation $\sim$ on $S*X$ by declaring that $(s,x)\sim (t,y)$ holds if
\[
\text{$x=y$ and there exists $e\in E(S)$ such that $x\in D^{\alpha}_e$ and $se=te$}.  
\]
Set $S\ltimes_{\alpha}X\defeq S*X/{\sim}$ and denote the equivalence class of $(s,x)\in S*X$ by $[s,x]$.
The unit space of $S\ltimes_{\alpha}X$ is $X$, where $X$ is identified with the subset of $S\ltimes_{\alpha}X$ via the injection
\[
X\ni x\mapsto [e,x] \in S\ltimes_{\alpha}X, x\in D^{\alpha}_e.
\]
The source map and range maps are defined by
\[
d([s,x])=x, r([s,x])=\alpha_s(x)
\]
for $[s,x]\in S\ltimes_{\alpha}X$.
The product of $[s,\alpha_t(x)],[t,x]\in S\ltimes_{\alpha}X$ is $[st,x]$.
The inverse should be $[s,x]^{-1}=[s^*,\alpha_s(x)]$.
Then $S\ltimes_{\alpha}X$ is a groupoid in these operations.
For $s\in S$ and an open set $U\subset D_{s^*s}^{\alpha}$,
define 
\[[s, U]\defeq \{[s,x]\in S\ltimes_{\alpha}X\mid x\in U\}.\]
These sets form an open basis of $S\ltimes_{\alpha}X$.
In these structures,
$S\ltimes_{\alpha}X$ is an \'etale groupoid.

\section{Correspondence between subsemigroups and subgroupoids}

In this section,
we consider strong tight actions of inverse semigroups (Definition \ref{definition of strongly tight}).
Then we establish a correspondence between certain subsemigroups of an inverse semigroup and open wide subgroupoids of an \'etale groupoid associated to a strongly tight action (Theorem \ref{main theorem}).
Then we observe a condition for an open wide subgroupoid to be closed in terms of an inverse semigroup.

\subsection{Correspondence between subsemigroups and subgroupoids}

We begin with the definition of a strongly tight action.

\begin{defi}\label{definition of strongly tight}
	Let $S$ be an inverse semigroup and $X$ be a locally compact Hausdorff space.
	An action $\alpha\colon S\curvearrowright X$ is said to be ample if $D_e^{\alpha}\subset X$ is a compact set for all $e\in E(S)$.
	We say that an action $\alpha	\colon S\curvearrowright X$ is strongly tight if $\{D_e^{\alpha}\}_{e\in E(S)}$ is a basis of $X$.
\end{defi}
	
	We remark that if there exists a strongly tight action $\alpha\colon S\curvearrowright X$, then $X$ is totally disconnected and $S\ltimes_{\alpha}X$ is an ample groupoid.

	Strongly tight actions are related with the actions on tight spectrums of inverse semigroups,
	which are investigated in \cite{ExelPardo2016}.
	We will see a relation between strongly tight actions and tight groupoids in Section \ref{section about characterization of strongly tight action}.
	
	We construct subsemigroups from wide groupoids.


\begin{defi}
	Let $S$ be an inverse semigroup,
	$X$ be a locally compact Hausdorff space
	and $\alpha\colon S\curvearrowright X$ be an action.
	Put $G\defeq S\ltimes_{\alpha}X$.
	For a wide subgroupoid $H\subset G$,
	we define 
	\[
	T_H\defeq \{s\in S\mid [s,D_{s^*s}]\subset H\}.
	\]
\end{defi}
\begin{prop}
	In the above notations,
	$T_H$ is a wide subsemigroup of $S$.
\end{prop}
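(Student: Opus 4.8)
The plan is to check the three defining properties of a wide subsemigroup separately: that $T_H$ contains $E(S)$, that it is closed under the involution $s\mapsto s^*$, and that it is closed under the product. I note in advance that none of this uses strong tightness; the argument goes through for an arbitrary action $\alpha\colon S\curvearrowright X$. The one structural identity I will lean on throughout is that $[s,\alpha_t(x)]\cdot[t,x]=[st,x]$ whenever $[s,\alpha_t(x)]$ and $[t,x]$ form a composable pair, together with the standard bookkeeping identities $D^{\alpha}_{ef}=D^{\alpha}_e\cap D^{\alpha}_f$ for $e,f\in E(S)$ and $\alpha_t\bigl(D^{\alpha}_{t^*ft}\bigr)=D^{\alpha}_{tt^*}\cap D^{\alpha}_f$ for $t\in S$, $f\in E(S)$.

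For wideness, given $e\in E(S)$ I would observe that $e^*e=e$, so $[e,D^{\alpha}_{e^*e}]=[e,D^{\alpha}_e]$, which under the identification of $X$ with $G^{(0)}$ is precisely the image of $D^{\alpha}_e$ inside $G^{(0)}$; since $H$ is wide this lies in $H$, hence $e\in T_H$. For closure under the involution, suppose $s\in T_H$. Because $\alpha_s$ is a homeomorphism of $D^{\alpha}_{s^*s}$ onto $D^{\alpha}_{ss^*}$ and $[s,x]^{-1}=[s^*,\alpha_s(x)]$, I get $[s^*,D^{\alpha}_{ss^*}]=\bigl([s,D^{\alpha}_{s^*s}]\bigr)^{-1}$, and this is contained in $H$ since $H$ is closed under inversion; hence $s^*\in T_H$.

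The real work is closure under the product, and I expect the only delicate point there to be the idempotent bookkeeping needed to see that the relevant pair is composable. Let $s,t\in T_H$ and fix $x\in D^{\alpha}_{(st)^*(st)}=D^{\alpha}_{t^*s^*st}$. Since $t^*s^*st\le t^*t$, I have $x\in D^{\alpha}_{t^*t}$, so $[t,x]\in[t,D^{\alpha}_{t^*t}]\subset H$. Applying the conjugation identity with $f=s^*s$ gives $\alpha_t(x)\in\alpha_t\bigl(D^{\alpha}_{t^*s^*st}\bigr)=D^{\alpha}_{tt^*}\cap D^{\alpha}_{s^*s}\subset D^{\alpha}_{s^*s}$, so $[s,\alpha_t(x)]\in[s,D^{\alpha}_{s^*s}]\subset H$; moreover $d([s,\alpha_t(x)])=\alpha_t(x)=r([t,x])$, so the pair is composable and $[st,x]=[s,\alpha_t(x)]\cdot[t,x]\in H$ because $H$ is closed under multiplication. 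Letting $x$ range over $D^{\alpha}_{(st)^*(st)}$, this gives $[st,D^{\alpha}_{(st)^*(st)}]\subset H$, i.e.\ $st\in T_H$, which completes the verification.
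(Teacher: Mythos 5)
Your proof is correct and follows essentially the same route as the paper's: verify containment of $E(S)$, closure under inversion, and closure under the product via the identity $[st,x]=[s,\alpha_t(x)][t,x]$. The only difference is that you spell out the idempotent bookkeeping ($x\in D_{t^*t}$ and $\alpha_t(x)\in D_{s^*s}$) that the paper leaves implicit, which is a harmless elaboration rather than a different argument.
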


\begin{proof}
	For $e\in E(S)$,
	$[e,D_e]\in G^{(0)}\subset H$ holds.
	Hence $T_H$ contains $E(S)$.
	
	Next we show that $T_H$ is an subsemigroup of $S$.
	We show $st\in T_H$ for $s,t\in T_H$.
	For $x\in D_{(st)^*st}$,
	it follows that $[s,\alpha_t(x)],[t,x]\in H$ from $s,t\in T_H$.
	Thus we obtain
	\[
	[st,x]=[s,\alpha_t(x)][t,x]\in H.
	\]
	Therefore we have $[st,D_{(st)^*st}]\subset H$ and $st\in T_H$.
	
	It is clear that $T_H$ is closed under the inverse.
	Hence $T_H$ is a wide subsemigroup of $S$.
	\qed
\end{proof}

We define a class of subsemigroups which corresponds to open wide subgroupoids (c.f.\ Theorem \ref{main theorem}).

\begin{defi}
		Let $S$ be an inverse semigroup,
		$X$ be a locally compact Hausdorff space
		and $\alpha\colon S\curvearrowright X$ be an action.
		A wide subsemigroup $T\subset S$ is said to be $\alpha$-join closed if $T$ has the next property : 
		\begin{center}
		`For every $s\in S$,
		$s$ belongs to $T$ if and only if there exists a finite set $F\subset E(S)$ such that $sf\in T$ holds for all $f\in F$ and $D_{s^*s}\subset\bigcup_{f\in F}D_{f}$ holds.'
		\end{center}
\end{defi}
\begin{rem}
	The ``only if'' part in the previous definition always holds for all wide subsemigroups $T$.
\end{rem}

\begin{prop}
	Let $S$ be an inverse semigroup,
	$X$ be a locally compact Hausdorff space
	and $\alpha\colon S\curvearrowright X$ be an action.
	For a wide subgroupoid $H\subset S\ltimes_{\alpha}X$,
	a wide subsemigroup $T_H\subset S$ is $\alpha$-join closed. 
\end{prop}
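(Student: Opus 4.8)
The plan is to verify that $T \defeq T_H$ has the property defining $\alpha$-join closedness, knowing already from the previous proposition that $T_H$ is a wide subsemigroup. The ``only if'' direction is free, as recorded in the remark above: given $s\in T_H$ one takes $F\defeq\{s^*s\}$, so that $sf = ss^*s = s\in T_H$ and $D_{s^*s}\subset D_{s^*s}$. Hence the content lies entirely in the ``if'' direction. So I would fix $s\in S$ and a finite set $F\subset E(S)$ with $sf\in T_H$ for every $f\in F$ and $D_{s^*s}\subset\bigcup_{f\in F}D_f$, and aim to show $[s,D_{s^*s}]\subset H$, which is the meaning of $s\in T_H$.

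First I would fix a point $x\in D_{s^*s}$ and, using the covering hypothesis, choose $f\in F$ with $x\in D_f$. Two elementary identities drive the argument. Since idempotents commute, $(sf)^*(sf) = fs^*sf = s^*sf$, whence $D_{(sf)^*(sf)} = D_{s^*s}\cap D_f$, and in particular $x\in D_{(sf)^*(sf)}$; and $(sf)f = sf$, so that $e\defeq f$ witnesses $(sf,x)\sim(s,x)$ in the construction of $S\ltimes_\alpha X$, i.e.\ $[sf,x] = [s,x]$. Now $sf\in T_H$ together with $x\in D_{(sf)^*(sf)}$ gives $[sf,x]\in H$, and therefore $[s,x]\in H$. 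Since $x\in D_{s^*s}$ was arbitrary, $[s,D_{s^*s}]\subset H$, i.e.\ $s\in T_H$, as desired.

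Conceptually, the point is simply that the germ $[s,x]$ depends on $s$ only ``locally near $x$'', so that membership of $[s,x]$ in $H$ may be tested through any restriction $sf$ with $x\in D_f$; notice that neither the topology nor the finiteness of $F$ is actually used, only the pointwise covering $D_{s^*s}\subset\bigcup_{f\in F}D_f$. I do not anticipate a genuine obstacle: the only step needing care is checking $x\in D_{(sf)^*(sf)}$ before invoking $sf\in T_H$, which is precisely the computation $(sf)^*(sf) = s^*sf$ together with the standard fact that $D_{ef} = D_e\cap D_f$ for idempotents $e,f$ (immediate, since each $\alpha_e$ is a partial identity).
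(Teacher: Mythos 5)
Your argument is correct and is essentially the paper's own proof: pick $x\in D_{s^*s}$, find $f\in F$ with $x\in D_f$, and observe $[s,x]=[sf,x]\in H$ since $sf\in T_H$. The extra checks you supply (that $(sf)^*(sf)=s^*sf$ so $x$ lies in $D_{(sf)^*(sf)}$, and that $e=f$ witnesses the germ equivalence) are exactly the details the paper leaves implicit, and your observation that the ``only if'' direction is automatic matches the paper's preceding remark.
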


\begin{proof}
	Take $s\in S$ and assume that there exists a finite set $F\subset E(S)$ such that $sf\in T_H$ for all $f\in F$ and $D_{s^*s}\subset \bigcup_{f\in F}D_f$.
	It suffices to show $s\in T_H$.
	For $x\in D_{s^*s}$,
	there exists $f\in F$ with $x\in D_f$.
	Since we have $sf\in T_H$,
	it follows
	\[
	[s,x]=[sf,x]\in H.
	\]
	Thus we obtain $[s,D_{s^*s}]\subset H$ and therefore $s\in T_H$.
	\qed
\end{proof}

The proof of the next proposition is left to the reader.

\begin{prop}
		Let $S$ be an inverse semigroup,
		$X$ be a locally compact Hausdorff space
		and $\alpha\colon S\curvearrowright X$ be an action.
		For a wide subsemigroup $T\subset S$,
		the map
		\[
		T\ltimes_{\alpha}X\ni [t,x]\mapsto [t,x]\in S\ltimes_{\alpha}X
		\]
		is an open map and an isomorphism onto its image.
\end{prop}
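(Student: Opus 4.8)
The plan is to produce the subgroupoid $T \ltimes_\alpha X$ concretely and verify the three required properties — that the indicated map is well-defined, that it is open, and that it is injective with image a subgroupoid — essentially by unwinding the construction of $S \ltimes_\alpha X$ and exploiting that $T$ is closed under product and inverse and contains $E(S)$. First I would observe that since $T$ is a wide subsemigroup, every $x \in X$ lies in some $D_e^\alpha = D_e$ with $e \in E(S) \subset T$, and $D_e^{\alpha|_T} = D_e^\alpha$ for all $e \in E(S)$, so the action $\alpha|_T \colon T \curvearrowright X$ satisfies the standing hypotheses and $T \ltimes_\alpha X$ is a genuine étale groupoid with unit space $X$. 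The candidate map $\Phi \colon [t,x]_T \mapsto [t,x]_S$ is well-defined because the equivalence relation on $T * X$ is literally the restriction of the one on $S * X$: if $(t,x) \sim_T (t',x)$ via some $e \in E(T) = E(S)$, the same $e$ witnesses $(t,x) \sim_S (t',x)$. It is manifestly a groupoid homomorphism, since source, range, multiplication and inversion in both groupoids are given by the same formulas ($d[t,x]=x$, $r[t,x]=\alpha_t(x)$, $[s,\alpha_t(x)][t,x]=[st,x]$, $[t,x]^{-1}=[t^*,\alpha_t(x)]$), and $st, t^* \in T$ whenever $s,t \in T$.

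Next I would check injectivity. Suppose $\Phi([t,x]_T) = \Phi([t',x]_S)$, i.e.\ $[t,x]_S = [t',x]_S$ with $t,t' \in T$. Then there is $e \in E(S)$ with $x \in D_e$ and $te = t'e$; since $E(S) \subset T$ we have $e \in E(T)$, so $(t,x) \sim_T (t',x)$ and $[t,x]_T = [t',x]_T$. (Note $\Phi$ need not be surjective, which is why we only claim an isomorphism onto its image; and the image is a subgroupoid precisely because $\Phi$ is a homomorphism.) For openness, I would use the basic open sets: a basis for $T \ltimes_\alpha X$ is given by the sets $[t,U]_T$ for $t \in T$ and $U \subset D_{t^*t}$ open, and $\Phi([t,U]_T) = [t,U]_S$, which is exactly a basic open set of $S \ltimes_\alpha X$. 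Hence $\Phi$ carries basic open sets to open sets, so $\Phi$ is open; composing with the co-restriction to its image, $\Phi$ is a homeomorphism onto $\Phi(T \ltimes_\alpha X)$ (an open subgroupoid of $S \ltimes_\alpha X$), which is the assertion.

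There is essentially no obstacle here — this is the kind of routine bookkeeping the authors flag as "left to the reader." The only point requiring a moment's care is the identification $E(T) = E(S) \cap T = E(S)$, which holds because $T$ is wide; this is what makes the two equivalence relations agree on $T * X$ rather than merely having one refine the other, and it is also what guarantees $T * X = \{(t,x) : t \in T,\ x \in D_{t^*t}\}$ sits inside $S * X$ compatibly. Once that is noted, well-definedness, the homomorphism property, injectivity, and openness each reduce to a one-line verification against the explicit formulas in Subsection 1.3.
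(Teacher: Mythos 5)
The paper leaves this proposition's proof to the reader, and your argument supplies exactly the intended routine verification: the crux is your observation that $E(T)=E(S)$ (since $T$ is wide) makes the germ relation on $T*X$ literally the restriction of the one on $S*X$, from which well-definedness, injectivity, the homomorphism property, and openness ($\Phi([t,U])=[t,U]$ on basic open sets) all follow as you describe. The only step you assert rather than check is continuity of $\Phi$, which is genuinely needed for ``homeomorphism onto its image'' since injectivity plus openness only yields continuity of the inverse; it is, however, the same one-line verification, as the preimage of a basic open set $[s,U]$ is the union of the sets $[t,U\cap D_e\cap D_{t^*t}]$ over $t\in T$ and $e\in E(S)$ with $te=se$.
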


Via the map in the previous proposition,
$T\ltimes_{\alpha}X$ is identified with the wide open subgroupoid of $S\ltimes_{\alpha}X$.

\begin{lem}\label{lemma 1 of main theorem}
		Let $S$ be an inverse semigroup,
	$X$ be a locally compact Hausdorff space
	and $\alpha\colon S\curvearrowright X$ be an action.
	For a wide subsemigroup $T\subset S$,
	$T_{T\ltimes_{\alpha} X}\supset T$ holds.
	Moreover,
	if $T$ is $\alpha$-join closed,
	$T_{T\ltimes_{\alpha} X}= T$ holds.
\end{lem}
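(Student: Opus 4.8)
The plan is to prove the two assertions separately, starting with the easier inclusion $T \subset T_{T\ltimes_\alpha X}$, and then using the $\alpha$-join closed hypothesis to get the reverse.

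First I would prove $T \subset T_{T\ltimes_\alpha X}$. Recall that, via the embedding of the previous proposition, $T\ltimes_\alpha X$ is identified with the wide open subgroupoid $\{[t,x] \in S\ltimes_\alpha X \mid t\in T,\ x\in D_{t^*t}\}$ of $S\ltimes_\alpha X$. So for $s\in T$ I simply observe that $[s, D_{s^*s}] = \{[s,x] \mid x\in D_{s^*s}\}$ is contained in $T\ltimes_\alpha X$ by definition of the image, hence $s\in T_{T\ltimes_\alpha X}$. This is essentially immediate from unwinding the definitions.

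Next, assuming $T$ is $\alpha$-join closed, I would prove $T_{T\ltimes_\alpha X}\subset T$. Take $s\in T_{T\ltimes_\alpha X}$, so $[s,D_{s^*s}]\subset T\ltimes_\alpha X$. For each $x\in D_{s^*s}$ the element $[s,x]$ lies in the image, so there exists $t_x\in T$ with $[s,x]=[t_x,x]$; by the definition of the equivalence relation $\sim$ on $S*X$, this means there is $e_x\in E(S)$ with $x\in D_{e_x}$ and $s e_x = t_x e_x$. Since $t_x\in T$ and $T$ is a wide subsemigroup (so $e_x\in T$, hence $t_x e_x\in T$), we get $s e_x = t_x e_x \in T$. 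Thus $\{e_x\}_{x\in D_{s^*s}}$ is a family in $E(S)$ with $s e_x\in T$ and $D_{s^*s}\subset \bigcup_{x} D_{e_x}$. Since $D_{s^*s}$ is compact (the action is ample, $X$ locally compact Hausdorff totally disconnected in the strongly tight case) — wait, the statement is for a general action, so instead I would need to extract a finite subcover; but here $D_{s^*s}$ need not be compact for a general action. Let me reconsider: $D_{s^*s}$ is open in $X$ and the $D_{e_x}$ form an open cover, but without compactness there is no finite subcover. However, note the hypothesis only requires the action $\alpha\colon S\curvearrowright X$ — so presumably in the context where this lemma is applied one should use compactness; but to be safe I would argue as follows. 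Actually, we do not need $D_{s^*s}$ itself to be compact: the $\alpha$-join closed condition asks for \emph{some} finite $F$ with $D_{s^*s}\subset\bigcup_{f\in F}D_f$, and in the absence of compactness this may fail. I would therefore point out that this direction genuinely uses that the cover $\{D_{e_x}\}_{x\in D_{s^*s}}$ admits a finite subcover, which holds whenever $D_{s^*s}$ is compact — in particular for ample actions — and that in the general case one should read the lemma as producing a (possibly infinite) witnessing family, or restrict to the ample setting where the subsequent Theorem \ref{main theorem} lives. Granting the finite subcover $D_{s^*s}\subset\bigcup_{f\in F}D_{e_x}$ with $s e_x\in T$ for each $x$ in the finite index set, the $\alpha$-join closed property of $T$ (the ``if'' direction) yields $s\in T$, completing the proof.

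The main obstacle I anticipate is exactly this compactness issue in passing from the pointwise family $\{e_x\}$ to a finite set $F$: the definition of $\alpha$-join closed is phrased with a \emph{finite} $F$, so the argument only closes cleanly when $D_{s^*s}$ is compact. I expect the intended reading is that this lemma is used in the strongly tight (hence ample) setting of Theorem \ref{main theorem}, where each $D_e$ is compact, so the finite subcover is automatic; modulo that the proof is a routine diagram-chase through the equivalence relation defining $S\ltimes_\alpha X$.
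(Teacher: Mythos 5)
Your proof follows exactly the same route as the paper's: the first inclusion by unwinding definitions, and for the reverse, producing for each $x\in D_{s^*s}$ an idempotent $e_x$ with $se_x\in T$ and $x\in D_{e_x}$ from $[s,x]\in T\ltimes_{\alpha}X$, extracting a finite subcover of $D_{s^*s}$, and invoking $\alpha$-join closedness. Your worry about compactness is well-founded and is in fact shared by the paper itself, whose proof says ``Since we assume that $D_{s^*s}$ is compact'' even though the lemma as stated only assumes an action on a locally compact Hausdorff space; the finite-subcover step genuinely requires the ample (e.g.\ strongly tight) hypothesis under which the lemma is applied in Theorem \ref{main theorem}, so your reading of the intended scope is correct.
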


\begin{proof}
	The inclusion $T_{T\ltimes_{\alpha} X}\supset T$ is clear.
	We assume that $T$ is $\alpha$-join closed and show $T_{T\ltimes_{\alpha} X}\subset T$.
	Take $s\in T_{T\ltimes_{\alpha}X}$ and fix $x\in D_{s^*s}$.
	Since we have $[s,x]\in T\ltimes_{\alpha}X$,
	there exists $e_x\in E(S)$ such that $se_x\in T$ and $x\in D_{e_x}$.
	Since we assume that $D_{s^*s}$ is compact,
	there exists a finite set $P\subset D_{s^*s}$ with $D_{t^*t}\subset \bigcup_{x\in P}D_{e_x}$.
	Using the condition that $T$ is $\alpha$-join closed,
	we obtain $t\in T$.
	Now we have shown $T_{T\ltimes_{\alpha} X}\subset T$.
	\qed
\end{proof}

\begin{lem}\label{lemma 2 of main theorem}
	Let $S$ be an inverse semigroup,
	$X$ be a locally compact Hausdorff space
	and $\alpha\colon S\curvearrowright X$ be an ample action.
	Put $G\defeq S\ltimes_{\alpha}X$.
	For a wide groupoid $H\subset G$,
	$T_H\ltimes_{\alpha}X\subset H$ holds.
	Moreover, if $H\subset G$ is open and $\alpha\colon S\curvearrowright X$ is strongly tight,
	$T_H\ltimes_{\alpha}X= H$ also holds.
\end{lem}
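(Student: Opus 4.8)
The plan is to establish the two inclusions separately. The inclusion $T_H\ltimes_{\alpha}X\subset H$ is immediate and uses neither ampleness nor strong tightness: viewing $T_H\ltimes_{\alpha}X$ inside $G$ through the identification of the preceding proposition, a typical element has the form $[t,x]$ with $t\in T_H$ and $x\in D_{t^*t}$, and by definition $t\in T_H$ means exactly $[t,D_{t^*t}]\subset H$, whence $[t,x]\in H$. So the content is the reverse inclusion $H\subset T_H\ltimes_{\alpha}X$ under the extra hypotheses that $H$ is open and $\alpha$ is strongly tight.

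For that, fix $\gamma\in H$ and write $\gamma=[s,x]$ with $x\in D_{s^*s}$. Since $H$ is open and the sets $[t,V]$ form a basis of $G$, there is a basic open set $[t,V]\subset H$ with $\gamma\in[t,V]$; comparing sources gives $x\in V$ and $[t,x]=[s,x]$, so there is $e\in E(S)$ with $x\in D_e$ and $te=se$. Set $W\defeq V\cap D_e$. Then for every $y\in W$ one has $[s,y]=[se,y]=[te,y]=[t,y]$ (using the idempotent $e$ in the definition of $\sim$), so $[s,W]\subset[t,V]\subset H$; moreover $W\subset D_{s^*s}$, because $te=se$ forces $et^*t=es^*s$ by commutativity of $E(S)$, hence $D_e\cap D_{t^*t}=D_e\cap D_{s^*s}$.

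Now strong tightness enters: as $\{D_f\}_{f\in E(S)}$ is a basis of $X$, pick $f\in E(S)$ with $x\in D_f\subset W$. From $D_f\subset W\subset D_{s^*s}$ we get $D_{(sf)^*sf}=D_{fs^*s}=D_f$, and $[sf,y]=[s,y]$ for every $y\in D_f$, so $[sf,D_{(sf)^*sf}]=[s,D_f]\subset[s,W]\subset H$, i.e.\ $sf\in T_H$. Since $x\in D_f=D_{(sf)^*sf}$ we conclude $\gamma=[s,x]=[sf,x]\in T_H\ltimes_{\alpha}X$, which finishes the proof. I expect the only delicate point to be the bookkeeping in the middle step — replacing an arbitrary basic neighborhood $[t,V]$ of $\gamma$ by one of the form $[s,W]$ with the same first coordinate as the chosen representative, and checking that the shrunk domain $W$ still lies in $D_{s^*s}$ so that all germs written down are legitimate; after that it is a routine use of the basis property from strong tightness and the description of $\sim$.
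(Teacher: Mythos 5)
Your proof is correct and follows essentially the same route as the paper: the first inclusion is the same one-line argument, and for the reverse inclusion the paper likewise takes $[s,x]\in H$, uses openness plus strong tightness to find an idempotent $e$ with $x\in D_e\subset D_{s^*s}$ and $[s,D_e]\subset H$, and concludes $se\in T_H$. The only difference is that you spell out the bookkeeping (passing from a basic neighborhood $[t,V]$ to one with first coordinate $s$) that the paper leaves implicit.
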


\begin{proof}
	Assume that $[s,x]\in T_H\ltimes_{\alpha}X$.
	Then there exists $t\in T_H$ such that $[s,x]=[t,x]$.
	Now we have 
	\[[s,x]=[t,x]\in [t,D_{t^*t}]\subset H. 
	\]
	
	Next we show the other inclusion $T_H\ltimes_{\alpha}X\supset H$ under the assumption that $\alpha$ is strongly tight and $H$ is open.
	Take $[s,x]\in H$.
	Since $H$ is open and $\alpha$ is strongly tight,
	there exists $e\in E(S)$ such that $x\in D_e\subset D_{s^*s}$ and $[s,D_e]\subset H$.
	One can see $[se,D_{(se)^*se}]\subset H$,
	so we have $se\in T_H$.
	Therefore it follows $[s,x]=[se,x]\in T_H\ltimes_{\alpha}X$.
	\qed
\end{proof}

The next theorem follows from Lemma \ref{lemma 1 of main theorem} and Lemma \ref{lemma 2 of main theorem}.

\begin{thm}\label{main theorem}
	Let $S$ be an inverse semigroup,
	$X$ be a locally compact Hausdorff space
	and $\alpha\colon S\curvearrowright X$ be an action.
	Assume that $\alpha\colon S\curvearrowright X$ is strongly tight.
	Let $\mathcal{T}$ denote the set of all wide $\alpha$-join closed subsemigroups of $S$.
	In addition, let $\mathcal{H}$ denote the set of all wide open subgroupoids of $G$.
	Then  maps \[\mathcal{T}\ni T\to T\ltimes_{\alpha}X\in \mathcal{H}\] and \[\mathcal{H}\ni H\to T_H\in \mathcal{T}\] are inverse maps of each other.
\end{thm}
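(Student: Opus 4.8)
The plan is to verify that both assignments are well defined and that their two compositions are identities; at this stage everything reduces to the results already established. First I would confirm well-definedness of the two maps. For $T\in\mathcal{T}$, the proposition preceding Lemma \ref{lemma 1 of main theorem} identifies $T\ltimes_{\alpha}X$ with its image under the embedding into $G=S\ltimes_{\alpha}X$, which is open and, since $T$ is wide, contains $G^{(0)}$; hence $T\ltimes_{\alpha}X\in\mathcal{H}$. For $H\in\mathcal{H}$, the two propositions about $T_H$ show that $T_H$ is a wide subsemigroup of $S$ which is moreover $\alpha$-join closed, so $T_H\in\mathcal{T}$. Thus both maps make sense as maps $\mathcal{T}\to\mathcal{H}$ and $\mathcal{H}\to\mathcal{T}$.

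Next I would compose the maps both ways. Starting from $T\in\mathcal{T}$ and forming $T_{T\ltimes_{\alpha}X}$, the ``moreover'' clause of Lemma \ref{lemma 1 of main theorem} --- whose hypothesis that $T$ be $\alpha$-join closed is exactly our standing assumption on members of $\mathcal{T}$ --- gives $T_{T\ltimes_{\alpha}X}=T$. Starting from $H\in\mathcal{H}$ and forming $T_H\ltimes_{\alpha}X$, the ``moreover'' clause of Lemma \ref{lemma 2 of main theorem} --- whose hypotheses, that $H$ be open and $\alpha$ be strongly tight (hence ample), hold here --- gives $T_H\ltimes_{\alpha}X=H$. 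Together these say the two compositions are $\id_{\mathcal{T}}$ and $\id_{\mathcal{H}}$, which is precisely the assertion of the theorem.

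I do not expect any genuine obstacle in this last step: the mathematical content sits entirely in Lemmas \ref{lemma 1 of main theorem} and \ref{lemma 2 of main theorem}, and the theorem is merely their conjunction together with the bookkeeping above. If pressed to name the subtle points, I would point to the two compactness-to-finiteness reductions inside those lemmas: in Lemma \ref{lemma 1 of main theorem}, compactness of $D_{s^*s}$ turns the pointwise data ``$se_x\in T$ with $x\in D_{e_x}$'' into a single finite cover on which the $\alpha$-join closed property can be invoked; and in Lemma \ref{lemma 2 of main theorem}, strong tightness (the family $\{D_e\}_{e\in E(S)}$ is a basis) together with openness of $H$ lets one shrink a basic bisection $[s,D_e]$ into $H$, forcing $se\in T_H$ and hence $[s,x]=[se,x]\in T_H\ltimes_{\alpha}X$. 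Granting those, the present theorem is immediate.
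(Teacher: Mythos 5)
Your proposal is correct and follows exactly the paper's route: the paper proves the theorem by simply citing Lemma \ref{lemma 1 of main theorem} and Lemma \ref{lemma 2 of main theorem}, whose ``moreover'' clauses give the two composite identities, just as you describe. Your additional remarks on well-definedness and on where compactness and strong tightness enter are accurate elaborations of what the paper leaves implicit.
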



\subsection{Closedness of subgroupoid}

We give conditions where $T\ltimes_{\alpha}X$ is closed in $S\ltimes_{\alpha}X$.

\begin{defi}
	Let $S$ be an inverse semigroup and $T\subset S$ be a wide subsemigroup.
	For $s\in S$, we define $\mathcal{J}_s^{T}\subset E(S)$ as
	\[
	\mathcal{J}_s^{T}\defeq \{e\in E(S)\mid se\in T \text{ and } e\leq s^*s\}.
	\]
\end{defi}

\begin{prop}
	In the above notations,
	$\mathcal{J}_s^{T}$ is an ideal of $E(S)$.
\end{prop}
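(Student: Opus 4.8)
The plan is to verify directly the downward-closure property from the definition of an ideal: given $e\in\mathcal{J}_s^{T}$ and $f\in E(S)$ with $f\leq e$, one must show $f\in\mathcal{J}_s^{T}$, i.e.\ that $sf\in T$ and $f\leq s^*s$. (Here it is harmless to assume $f\in E(S)$, since every element below an idempotent is itself idempotent, and in any case $\mathcal{J}_s^T\subset E(S)$ by definition.)

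First I would dispose of the order condition. Since $\leq$ is a partial order on $S$, in particular transitive, and $e\leq s^*s$ holds because $e\in\mathcal{J}_s^{T}$, the hypothesis $f\leq e$ immediately gives $f\leq s^*s$.

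Next I would show $sf\in T$. As $f$ and $e$ are idempotents with $f\leq e$, we have $ef=f$ by the characterization of $\leq$ on $E(S)$ recalled in the preliminaries. Hence $sf=s(ef)=(se)f$. Now $se\in T$ because $e\in\mathcal{J}_s^{T}$, and $f\in E(S)\subset T$ because $T$ is a wide subsemigroup; since $T$ is closed under products, it follows that $sf=(se)f\in T$. Together with the previous step this yields $f\in\mathcal{J}_s^{T}$, so $\mathcal{J}_s^{T}$ is an ideal of $E(S)$.

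The argument is entirely elementary and I do not anticipate any real obstacle; the only point needing a moment's care is the step $f\leq e\Rightarrow ef=f$, which is precisely the stated description of the natural order on idempotents. If one wishes, one can additionally remark that $0\in\mathcal{J}_s^{T}$ (as $s0=0\in E(S)\subset T$ and $0\leq s^*s$), so the ideal is nonempty, though this is not required by the definition.
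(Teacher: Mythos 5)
Your proof is correct and follows essentially the same route as the paper's: the key step in both is writing $sf=(se)f$ using $ef=f$ and invoking that $T$ is a wide subsemigroup, together with transitivity of $\leq$ for the condition $f\leq s^*s$. Your version just spells out the details the paper leaves implicit.
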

\begin{proof}
	Assume $e\in\mathcal{J}_s^T$ and $f\leq e$.
	Then we have $sf=sef\in T$.
	Hence we obtain $f\in \mathcal{J}_s^T$.
	\qed
\end{proof}

We remark that 
\[
\mathcal{J}_s^{E(S)}=\{e\in E(S)\mid e\leq s\}
\]
holds.
This ideal appears in \cite[Definition 3.11]{ExelPardo2016}.

Assume that an action $\alpha\colon S\curvearrowright X$ is given.
For an ideal $\mathcal{J}\subset E(S)$,
we define $D(\mathcal{J})\defeq\bigcup_{e\in\mathcal{J}}D_e$.
The next lemma is a slight generalization of \cite[Proposition 3.14]{ExelPardo2016}.

\begin{lem}\label{formula for the domain of ideal}
		Let $S$ be an inverse semigroup,
		$X$ be a locally compact Hausdorff space
		and $\alpha\colon S\curvearrowright X$ be an action.
		Assume that we are given a wide subsemigroup $T\subset S$.
		Then the formula
		\[
		[s,D(\mathcal{J}^T_s)]=[s,D_{s^*s}]\cap (T\ltimes_{\alpha}X)
		\]
		holds for all $s\in S$.
\end{lem}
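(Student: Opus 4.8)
The plan is to prove the two inclusions of the set equality $[s,D(\mathcal{J}^T_s)]=[s,D_{s^*s}]\cap (T\ltimes_{\alpha}X)$ separately, working entirely at the level of germs $[s,x]$ and unwinding the definition of $T\ltimes_{\alpha}X$ as the wide open subgroupoid consisting of those germs $[t,x]$ with $t\in T$.

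For the inclusion ``$\subseteq$'', I would take $x\in D(\mathcal{J}^T_s)$, so there is $e\in\mathcal{J}^T_s$ with $x\in D_e$; by definition $e\leq s^*s$ and $se\in T$. Since $e\leq s^*s$ we have $x\in D_e\subset D_{s^*s}$, so $[s,x]\in[s,D_{s^*s}]$; and since $x\in D_e$ with $se\in T$, the germ $[s,x]=[se,x]$ lies in $T\ltimes_{\alpha}X$. Hence $[s,x]$ is in the intersection. The only point to check is the standard identity $[s,x]=[se,x]$ for $x\in D_e$, which is immediate from the equivalence relation defining $S\ltimes_\alpha X$ (witnessed by the idempotent $e$ itself, since $se\cdot e=se=s\cdot e$, using $e\leq s^*s$ to ensure $x$ lies in the relevant domain).

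For the inclusion ``$\supseteq$'', suppose $[s,x]\in[s,D_{s^*s}]\cap(T\ltimes_\alpha X)$. Then $x\in D_{s^*s}$ and there is $t\in T$ with $[s,x]=[t,x]$, so by the definition of $\sim$ there is $f\in E(S)$ with $x\in D_f$ and $sf=tf$. Now replace $f$ by $e\defeq f s^*s\in E(S)$: then $e\leq s^*s$, and $x\in D_f\cap D_{s^*s}=D_{fs^*s}=D_e$, and $se=sfs^*s=sf$ (since $x\in D_{s^*s}$, or rather since $s=sfs^*s$... more carefully, $se = sf\cdot s^*s$, and because $tf=sf$ we also get $te = tf s^*s$; one checks $se\leq t$, hence $se=t\cdot (se)^*(se)\in T$ as $T$ is a subsemigroup closed under products with idempotents). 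Thus $e\in\mathcal{J}^T_s$ and $x\in D_e\subset D(\mathcal{J}^T_s)$, giving $[s,x]\in[s,D(\mathcal{J}^T_s)]$.

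The main obstacle is the bookkeeping in the ``$\supseteq$'' direction: the naive witness $f$ coming from $sf=tf$ need not satisfy $f\leq s^*s$, so one must intersect with $s^*s$ and verify that the modified idempotent $e=fs^*s$ still witnesses $se\in T$ — i.e. that $se$ is a restriction of $t$ (equivalently $t\cdot(se)^*(se)=se$), so that $se=t(se)^*(se)$ lies in $T$ because $T$ is wide and closed under multiplication. This is a routine manipulation in the inverse semigroup $S$ using $x\in D_{s^*s}$ and the relations $e^2=e$, $e\leq s^*s$, $sf=tf$, but it is the step that requires care. Everything else is a direct application of the definitions of $\mathcal{J}^T_s$, $D(\mathcal{J})$, and the basic open sets $[s,U]$ of $S\ltimes_\alpha X$.
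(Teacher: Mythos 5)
Your proposal is correct and follows essentially the same route as the paper: the forward inclusion via $[s,x]=[se,x]$ with $se\in T$, and the reverse inclusion by intersecting the witness $f$ (from $sf=tf$) with $s^*s$ to land in $\mathcal{J}^T_s$. The paper's justification that $s(s^*sf)\in T$ is just the observation $ss^*sf=sf=tf=t\cdot f\in T$ since $T$ is wide and closed under products, which is the same fact you establish slightly more circuitously via $se\leq t$.
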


\begin{proof}
	Take $[s,x]\in [s, D(\mathcal{J}^{T}_s)]$.
	Then there exists $e\in\mathcal{J}^T_s$ with $x\in D_e$.
	By the definition of $\mathcal{J}^T_s$,
	we have $se\in T$ and $e\leq s^*s$.
	Hence we obtain
	\[
	[s,x]=[se,x]\in [s,D_{s^*s}]\cap (T\ltimes_{\alpha}X).
	\]
	Now we have shown $[s,D(\mathcal{J}^T_s)]\subset [s,D_{s^*s}]\cap (T\ltimes_{\alpha}X)$.
	To show the reverse inclusion,
	take $[s,x] \in [s,D_{s^*s}]\cap (T\ltimes_{\alpha}X)$.
	Since $[s,x]$ belongs to $T\ltimes_{\alpha}X$,
	there exists $t\in T$ and $f\in E(S)$ such that $sf=tf$ and $x\in D_f$ hold.
	Since we have $ss^*sf=sf=tf\in T$,
	$s^*sf$ belongs to $\mathcal{J}^{T}_s$.
	Since we also have $x\in D_{s^*sf}\subset \mathcal{J}^{T}_s$,
	we obtain $[s,x] \in [s,D(\mathcal{J}_s^T)]$.
	\qed
\end{proof}


\begin{prop}\label{closedness of subgroupoid}
	Let $S$ be an inverse semigroup,
	$X$ be a locally compact Hausdorff space
	and $\alpha\colon S\curvearrowright X$ be an action.
	Assume that we are given a wide subsemigroup $T\subset S$.
	The following conditions are equivalent :
	\begin{enumerate}
	\item $T\ltimes_{\alpha}X$ is a closed subset of $S\ltimes_{\alpha}X$,
	\item for every $s\in S$, $D(\mathcal{J}^T_s)$ is a closed subset of $D_{s^*s}$ with respect to the relative topology of $D_{s^*s}$.
	\end{enumerate}
\end{prop}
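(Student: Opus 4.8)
The plan is to prove both implications directly, using the key identity from Lemma~\ref{formula for the domain of ideal}, namely $[s,D(\mathcal{J}^T_s)]=[s,D_{s^*s}]\cap(T\ltimes_{\alpha}X)$, together with the basic topology of $G\defeq S\ltimes_{\alpha}X$. The guiding principle is that $G$ is covered by the open basic bisections $[s,D_{s^*s}]$ for $s\in S$, and each such bisection, equipped with its relative topology, is homeomorphic to $D_{s^*s}$ via the source map $d$ (this is how the topology on $S\ltimes_{\alpha}X$ was defined). Hence closedness of a subset of $G$ can be tested by intersecting with each $[s,D_{s^*s}]$ and transporting the question to $D_{s^*s}$ along $d$.

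For the implication (1)$\Rightarrow$(2): assume $T\ltimes_{\alpha}X$ is closed in $G$. Fix $s\in S$. Then $[s,D_{s^*s}]\cap(T\ltimes_{\alpha}X)$ is closed in the relative topology of the bisection $[s,D_{s^*s}]$. By Lemma~\ref{formula for the domain of ideal} this set equals $[s,D(\mathcal{J}^T_s)]$. Applying the homeomorphism $d\colon[s,D_{s^*s}]\to D_{s^*s}$, which sends $[s,D(\mathcal{J}^T_s)]$ onto $D(\mathcal{J}^T_s)$, we conclude that $D(\mathcal{J}^T_s)$ is closed in $D_{s^*s}$.

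For the implication (2)$\Rightarrow$(1): suppose $D(\mathcal{J}^T_s)$ is closed in $D_{s^*s}$ for every $s\in S$. To show $T\ltimes_{\alpha}X$ is closed in $G$, it suffices to show its complement is open, and for that it suffices to check, for each basic open set $[s,D_{s^*s}]$ (these cover $G$, indeed one can use the smaller basic sets $[s,U]$), that $[s,D_{s^*s}]\setminus(T\ltimes_{\alpha}X)$ is open in $G$. Transporting via the homeomorphism $d$, this set corresponds to $D_{s^*s}\setminus D(\mathcal{J}^T_s)$, which is open in $D_{s^*s}$ by hypothesis; since $D_{s^*s}$ is open in $X=G^{(0)}$ and $[s,D_{s^*s}]$ is open in $G$, the set $[s,D_{s^*s}]\setminus(T\ltimes_{\alpha}X)$ is open in $G$. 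Taking the union over all $s\in S$ gives that $G\setminus(T\ltimes_{\alpha}X)$ is open, since every point $[t,y]\in G$ with $[t,y]\notin T\ltimes_{\alpha}X$ lies in some basic bisection $[t,D_{t^*t}]$ and hence in $[t,D_{t^*t}]\setminus(T\ltimes_{\alpha}X)$.

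The main point to be careful about is the passage between ``closed/open in the relative topology of the bisection $[s,D_{s^*s}]$'' and ``closed/open in $G$''. Since $[s,D_{s^*s}]$ is itself open in $G$, a subset of it is open in $G$ if and only if it is open in the relative topology, which is what makes the (2)$\Rightarrow$(1) direction go through cleanly; for (1)$\Rightarrow$(2) only the trivial direction (closed in $G$ implies closed in the subspace) is needed. One should also note that $\mathcal{J}^T_s$ is an ideal of $E(S)$ by the preceding proposition, so $D(\mathcal{J}^T_s)$ is at least an open subset of $D_{s^*s}$ (being a union of the open sets $D_e$), which is why ``closed'' is the substantive condition. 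No appeal to ampleness or strong tightness is needed here.
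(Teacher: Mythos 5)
Your proposal is correct and follows essentially the same route as the paper: both directions hinge on the identity $[s,D(\mathcal{J}^T_s)]=[s,D_{s^*s}]\cap(T\ltimes_{\alpha}X)$ from Lemma \ref{formula for the domain of ideal}, the homeomorphism $d\colon[s,D_{s^*s}]\to D_{s^*s}$, and for (2)$\Rightarrow$(1) the decomposition of the complement of $T\ltimes_{\alpha}X$ as the union over $s\in S$ of the open sets $[s,D_{s^*s}]\setminus[s,D(\mathcal{J}^T_s)]$. Your added remarks on relative versus ambient openness and on the hypotheses not requiring ampleness are accurate but do not change the argument.
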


\begin{proof}
	First we show that (1) implies (2).
	By Lemma \ref{formula for the domain of ideal} and (1),
	$[s,D(\mathcal{J}_s^T)]$ is a closed subset of $[s,D_{s^*s}]$.
	Since the restriction of the domain map $d\colon[s,D_{s^*s}]\to D_{s^*s}$ is a homeomorphism,
	$d([s,D(\mathcal{J}_s^T)])=D(\mathcal{J}_s^T)$ is closed in $D_{s^*s}$.
	Next we show that (2) implies (1).
	It follows that $[s,D(\mathcal{J}^T_s)]$ is a closed subset of $[s,D_{s^*s}]$ from the same argument in the above and (2).
	We have that $[s,D_{s^*s}]\setminus [s,D(\mathcal{J}^T_s)]$ is open in $S\ltimes_{\alpha}X$ since $[s,D_{s^*s}]$ is open in $S\ltimes_{\alpha}X$ and $[s,D(\mathcal{J}^{T}_s)]$ is closed in $[s,D_{s^*s}]$.
	One can see that the formula
	\[
	S\ltimes_{\alpha}X\setminus T\ltimes_{\alpha}X=\bigcup_{s\in S}([s,D_{s^*s}]\setminus [s,D(\mathcal{J}^T_s)])
	\]
	holds.
	Hence $S\ltimes_{\alpha}X\setminus T\ltimes_{\alpha}X$ is open in $S\ltimes_{\alpha}X$, which implies $T\ltimes_{\alpha}X$ is closed in $S\ltimes_{\alpha}X$.
	\qed
\end{proof}

The next Lemma is essentially same as the \cite[Proposition 3.7]{ExelPardo2016}.
We give a proof for the reader's convenience.


\begin{lem}[c.f.\ {\cite[Proposition 3.7]{ExelPardo2016}}] \label{lemma about cover}
	Let $S$ be an inverse semigroup,
	$X$ be a locally compact Hausdorff space
	and $\alpha\colon S\curvearrowright X$ be a strongly tight action.
	Assume that $D_e\not=\emptyset$ holds for every $e\in E(S)\setminus\{0\}$.
	For a ideal $\mathcal{J}\subset E(S)$ and a subset $C\subset \mathcal{J}$,
	the followings are equivalent :
	\begin{enumerate}
		\item $C$ is a cover of $\mathcal{J}$,
		\item $\bigcup_{c\in C}D_c=D(\mathcal{J})$ holds.
	\end{enumerate}
\end{lem}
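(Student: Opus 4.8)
The statement is an equivalence of two conditions on a subset $C$ of an ideal $\mathcal{J}\subset E(S)$, under the hypothesis that $\alpha$ is strongly tight and $D_e\neq\emptyset$ for all nonzero idempotents. Since $C\subset\mathcal{J}$, the inclusion $\bigcup_{c\in C}D_c\subset D(\mathcal{J})$ is automatic, so (2) is really the assertion that $\bigcup_{c\in C}D_c\supset D(\mathcal{J})$, i.e.\ that $\bigcup_{c\in C}D_c\supset D_e$ for every $e\in\mathcal{J}$. The plan is to prove the two implications separately, using the fact that $\{D_e\}_{e\in E(S)}$ is a basis of $X$ together with the following translation of the algebraic relations into set-theoretic ones: for $e,f\in E(S)$ one has $D_{ef}=D_e\cap D_f$, and $ef\neq 0$ if and only if $D_{ef}\neq\emptyset$ (by the hypothesis that nonzero idempotents have nonempty domains, together with $D_0=\emptyset$).

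First I would prove $(2)\Rightarrow(1)$. Assume $\bigcup_{c\in C}D_c=D(\mathcal{J})$ and take $e\in\mathcal{J}\setminus\{0\}$. Then $D_e\neq\emptyset$ and $D_e\subset D(\mathcal{J})=\bigcup_{c\in C}D_c$, so there exists $c\in C$ with $D_e\cap D_c\neq\emptyset$, i.e.\ $D_{ec}\neq\emptyset$, which forces $ec\neq 0$. Hence $C$ is a cover of $\mathcal{J}$. This direction is short and does not even need strong tightness, only the nonemptiness hypothesis.

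For $(1)\Rightarrow(2)$, suppose $C$ is a cover of $\mathcal{J}$; I must show $D_e\subset\bigcup_{c\in C}D_c$ for every $e\in\mathcal{J}$. Fix $e\in\mathcal{J}$ and a point $x\in D_e$. Using that $\{D_f\}_{f\in E(S)}$ is a basis of $X$, I want to produce $c\in C$ with $x\in D_c$; the natural strategy is to argue by contradiction. If $x\notin\bigcup_{c\in C}D_c$ fails to lead anywhere directly, the cover property only gives, for each nonzero $g\leq e$, some $c\in C$ with $gc\neq 0$, i.e.\ $D_g\cap D_c\neq\emptyset$ — but this does not immediately place $x$ itself in some $D_c$. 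This is the main obstacle: passing from ``every small neighborhood of $x$ meets some $D_c$'' to ``$x$ lies in some $D_c$''. I expect this to be resolved exactly as in \cite[Proposition 3.7]{ExelPardo2016}: one shrinks $D_e$ around $x$ through the basis $\{D_g\}$, and uses that each $D_c$ is \emph{compact} (hence closed, since $X$ is Hausdorff), so that its complement is open; if $x\notin D_c$ for all $c\in C$, then for each of the finitely many $c$'s that are relevant one can find a basic neighborhood $D_{g}\ni x$ with $D_g\cap D_c=\emptyset$, i.e.\ $gc=0$, contradicting the cover property applied to a single $g\leq e$ obtained by intersecting these finitely many idempotents. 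Making the ``finitely many relevant $c$'s'' precise is where a compactness argument on $D_e$ (or a careful direct argument) is needed, and this is the crux of the proof.

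Finally I would assemble the two implications into the stated equivalence. Throughout, the only nontrivial inputs are: the basis property of $\{D_e\}_{e\in E(S)}$ (strong tightness), the ampleness-type fact that each $D_c$ is compact and $X$ is Hausdorff so $D_c$ is closed, and the bookkeeping identity $ef\neq 0\iff D_e\cap D_f\neq\emptyset$ coming from the standing hypothesis on nonzero idempotents; these let the algebraic notion of ``cover'' and the topological notion of ``open cover of $D(\mathcal{J})$'' be identified.
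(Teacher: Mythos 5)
Your direction $(2)\Rightarrow(1)$ is complete and coincides with the paper's. The gap is in $(1)\Rightarrow(2)$. You set up the right contradiction --- if $x\in D(\mathcal J)$ lies in no $D_c$, then since each $D_c$ is compact (hence closed) and $\{D_f\}_{f\in E(S)}$ is a basis, for each $c\in C$ there is $e_c\in E(S)$ with $x\in D_{e_c}$ and $D_{e_c}\cap D_c=\emptyset$, whence $e_cc=0$ --- but you explicitly defer the crux (``making the finitely many relevant $c$'s precise''), and the repair you sketch would not close it. To contradict the definition of a cover you need a single nonzero $p\in\mathcal J$ with $pc=0$ for \emph{every} $c\in C$; disposing of finitely many $c$'s achieves nothing, because the cover condition is an existential statement over all of $C$, and compactness of $D_e$ only extracts finite subcovers from open covers of $D_e$, which is exactly what is in question. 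The paper instead forms the single product $p\defeq\prod_{c\in C}e_c$ (one should also multiply by some $e\in\mathcal J$ with $x\in D_e$, so that $p\le e$ lies in the ideal $\mathcal J$); then $x\in D_p$ gives $p\ne0$, while $cp\le ce_c=0$ for all $c$. This is a legitimate finite product exactly when $C$ is finite; equivalently, for finite $C$ the union $\bigcup_{c\in C}D_c$ is closed, so $x$ has a basic neighbourhood disjoint from all the $D_c$ at once.

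Your suspicion that something substantive is missing at this point is sound, but the missing ingredient is finiteness of $C$, not a compactness argument, and without it $(1)\Rightarrow(2)$ is false as stated. For example, let $X=\{0\}\cup\{1/n\mid n\in\N\}\subset\R$, let $S=E(S)$ be the semilattice of clopen subsets of $X$ acting by identity maps (so $D_U=U$; this action is strongly tight and $D_U\ne\emptyset$ for $U\ne 0$), and take $\mathcal J=E(S)$ and $C=\{\{1/n\}\mid n\in\N\}$. Every nonempty clopen subset of $X$ contains some $1/n$, so $C$ is a cover of $\mathcal J$, yet $\bigcup_{c\in C}D_c=X\setminus\{0\}\subsetneq X=D(\mathcal J)$. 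The lemma is invoked in the paper only for finite $C$, and its proof implicitly assumes this; to complete your argument you should add that hypothesis and then run the product (or closed-finite-union) argument above.
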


\begin{proof}
	First we show (1) implies (2).
	The inclusion $\bigcup_{c\in C}D_c\subset D(\mathcal{J})$ follows from $C\subset\mathcal{J}$.
	We show the reverse inclusion.
	Take $x\in D(\mathcal{J})$.
	Assume that $x\not\in D_c$ holds for all $c\in C$.
	For each $c\in C$, there exists $e_c\in E(S)$ such that $x\in D_{e_c}$ and $D_{e_c}\cap D_c=\emptyset$ since each $D_c$ is closed in $X$ and $\{D_e\}_{e\in E(S)}$ is a basis of $X$.
	Since $D_{ce_c}=D_{c}\cap D_{e_c}=\emptyset$ and we assume $D_e\not=\emptyset$ for all $e\in E(S)\setminus\{0\}$,
	we have $ce_c=0$.
	Putting $p\defeq \prod_{c\in C}e_c$, we have  $p\in\mathcal{J}\setminus\{0\}$ since $\mathcal{J}$ is ideal and $x\in D_p$.
	However we also have $cp=0$ for each $c\in C$, which contradicts to the condition that $C$ is a cover.
	
	Next we show (2) implies (1).
	Take $e\in \mathcal{J}\setminus\{0\}$.
	Then there exists $c\in C$ such that $D_e\cap D_c\not=\emptyset$,
	which implies $ec\not=0$.
	Hence $C$ is a cover of $\mathcal{J}$.
	\qed
\end{proof}

Now we obtain the characterization about the closedness of open wide subgroupoids.

\begin{thm}
	Let $S$ be an inverse semigroup,
	$X$ be a locally compact Hausdorff space
	and $\alpha\colon S\curvearrowright X$ be a strongly tight action.
	Assume that $D_e\not=\emptyset$ holds for every $e\in E(S)\setminus\{0\}$.
	For a wide subsemigroup $T\subset S$, the following conditions are equivalent :
	\begin{enumerate}
		\item $T\ltimes_{\alpha}X$ is closed in $S\ltimes_{\alpha}X$,
		\item for every $s\in S$, $D(\mathcal{J}^T_s)$ is relatively closed in $D_{s^*s}$,
		\item for every $s\in S$,
		$\mathcal{J}^T_s$ has a finite cover.
	\end{enumerate}
\end{thm}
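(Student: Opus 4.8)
The plan is to prove the cycle of implications (1) $\Rightarrow$ (2) $\Rightarrow$ (3) $\Rightarrow$ (1). The equivalence of (1) and (2) is already established in Proposition \ref{closedness of subgroupoid}, so the only real work is relating the topological closedness condition (2) to the purely algebraic finite-cover condition (3); here Lemma \ref{lemma about cover} is the crucial bridge, since the hypothesis $D_e\neq\emptyset$ for $e\in E(S)\setminus\{0\}$ is exactly what makes that lemma available, letting us pass between covers of the ideal $\mathcal{J}^T_s$ and open covers of $D(\mathcal{J}^T_s)$ by the sets $D_c$.

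For (2) $\Rightarrow$ (3): fix $s\in S$. Since $\alpha$ is strongly tight and $\{D_e\}_{e\in E(S)}$ is a basis, and since (by the remark after Definition \ref{definition of strongly tight}) $S\ltimes_\alpha X$ is ample, $D_{s^*s}$ is compact. By hypothesis $D(\mathcal{J}^T_s)$ is relatively closed in $D_{s^*s}$, hence compact. Now $D(\mathcal{J}^T_s)=\bigcup_{e\in\mathcal{J}^T_s}D_e$ is an open cover of this compact set, so finitely many $D_{e_1},\dots,D_{e_n}$ with $e_i\in\mathcal{J}^T_s$ already cover it; that is, $\bigcup_{i=1}^n D_{e_i}=D(\mathcal{J}^T_s)$. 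By Lemma \ref{lemma about cover}, $C\defeq\{e_1,\dots,e_n\}$ is a (finite) cover of $\mathcal{J}^T_s$, giving (3).

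For (3) $\Rightarrow$ (2): fix $s\in S$ and let $C\subset\mathcal{J}^T_s$ be a finite cover. By Lemma \ref{lemma about cover}, $\bigcup_{c\in C}D_c=D(\mathcal{J}^T_s)$. Each $D_c$ with $c\in\mathcal{J}^T_s\subset E(S)$ is compact, hence closed in the Hausdorff space $X$ (and in particular relatively closed in $D_{s^*s}$), and a finite union of closed sets is closed, so $D(\mathcal{J}^T_s)$ is relatively closed in $D_{s^*s}$. This is precisely (2), and combined with Proposition \ref{closedness of subgroupoid} we recover (1), closing the cycle.

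The main obstacle is essentially bookkeeping rather than depth: one must be careful that the hypothesis $D_e\neq\emptyset$ for nonzero idempotents is genuinely used (it is needed to invoke Lemma \ref{lemma about cover} in both directions), and that compactness of the $D_e$ is available — this follows because strong tightness forces the $D_e$ to be compact open sets (the groupoid is ample), which is why no separate ``ample action'' hypothesis is needed in the statement. Beyond that, the argument is a routine compactness-versus-finite-cover translation layered on top of the already-proven Proposition \ref{closedness of subgroupoid}.
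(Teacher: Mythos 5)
Your proposal is correct and follows the paper's own proof essentially verbatim: (1)$\Leftrightarrow$(2) is delegated to Proposition \ref{closedness of subgroupoid}, and (2)$\Leftrightarrow$(3) is the same compactness-versus-finite-subcover translation through Lemma \ref{lemma about cover}. The only quibble is your parenthetical claim that strong tightness by itself \emph{forces} the $D_e$ to be compact — as literally defined it only makes them a basis; compactness comes from the paper's standing convention that strongly tight actions are ample, which is exactly the hypothesis the paper's own proof invokes at the same step.
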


\begin{proof}
Now it suffices to show that (2) and (3) are equivalent,
since Proposition \ref{closedness of subgroupoid} states that (1) and (2) are equivalent.
First we show that (2) implies (3).
Since we assume that the action $\alpha$ is ample,
$D_{s^*s}$ is compact.
Then $D(\mathcal{J}_s^T)$ is also compact by (2).
Hence there exists a finite set $C\subset\mathcal{J}_s^T$ such that $\bigcup_{c\in C}D_c=D(\mathcal{J}_s^T)$.
By Lemma \ref{lemma about cover},
$C$ is a finite cover of $\mathcal{J}^T_s$.

Next we show that (3) implies (2).
Take $s\in S$ and a finite cover $C$ of $\mathcal{J}^T_s$.
By Lemma \ref{lemma about cover} again,
we have $D(\mathcal{J})=\bigcup_{c\in C}D_c$.
Hence we have $D(\mathcal{J})$ is compact and therefore closed in $D_{s^*s}$ since each $D_c$ is compact.
\qed
\end{proof}

Wide clopen subgroupoids arise from partial group homomorphisms.
We observe this fact in the remainder of this subsection.
 
Let $S$ be an inverse semigroup and $\Gamma$ be a group.
Put $S^{\times}\defeq S\setminus\{0\}$.
A map $\sigma\colon S^{\times}\to\Gamma$ is called a partial homomorphism if $\sigma(st)=\sigma(s)\sigma(t)$ holds for any pair $s,t\in S^{\times}$ with $st\not=0$. 
A partial homomorphism gives us a suitable subsemigroup as follows.

\begin{prop}
	Let $S$ be an inverse semigroup, $\Gamma$ be a group and $\sigma\colon S^{\times}\to \Gamma$ be a partial homomorphism.
	Assume that we are given a locally compact space $X$ and an action $\alpha\colon S\curvearrowright X$ where $D_e\not=\emptyset$ holds for each $e\in E(S)\setminus\{0\}$.
	Then the following statements hold :
	\begin{enumerate}
	\item $\ker\sigma\defeq \sigma^{-1}(e)\cup\{0\}$ is a $\alpha$-join closed wide subsemigroup of $S$, 
	\item $\ker\sigma\ltimes_{\alpha}X$ is closed in $S\ltimes_{\alpha}X$.
	\end{enumerate}
\end{prop}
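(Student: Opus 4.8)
The plan is to verify the two claims separately, using the machinery already set up. For (1), I would first show that $\ker\sigma$ is a wide subsemigroup: it contains $E(S)$ since every idempotent $e\neq 0$ satisfies $e=ee$, hence $\sigma(e)=\sigma(e)^2$, forcing $\sigma(e)$ to be the identity of $\Gamma$; and $0\in\ker\sigma$ by definition. Closure under products follows because if $s,t\in\ker\sigma$ and $st\neq 0$ then $\sigma(st)=\sigma(s)\sigma(t)$ is the identity, while if $st=0$ then $st\in\ker\sigma$ trivially; closure under inverses follows from $\sigma(s)\sigma(s^*)=\sigma(ss^*)=e_\Gamma$ (when $ss^*\neq 0$, which holds whenever $s\neq 0$). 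The substantive part is $\alpha$-join closedness. By the remark after the definition, only the ``if'' direction needs proof: suppose $s\in S$ and there is a finite $F\subset E(S)$ with $sf\in\ker\sigma$ for all $f\in F$ and $D_{s^*s}\subset\bigcup_{f\in F}D_f$. If $s=0$ there is nothing to prove, so assume $s\neq 0$; then $D_{s^*s}\neq\emptyset$ by hypothesis, so some $f\in F$ has $D_{sf(sf)^*}\supset$ something nonempty, hence $sf\neq 0$, hence $\sigma(sf)=e_\Gamma$. Now $\sigma(s)=\sigma(s)\sigma(f)^{-1}\sigma(f)$... more directly: since $sf\neq 0$ we have $\sigma(s)\sigma(f)=\sigma(sf)=e_\Gamma$ and $\sigma(f)=e_\Gamma$ (as $f$ is a nonzero idempotent), so $\sigma(s)=e_\Gamma$, i.e.\ $s\in\ker\sigma$. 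This is the only place the hypothesis $D_e\neq\emptyset$ for $e\neq 0$ is used, and it is exactly what makes the argument go through.

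For (2), I would invoke the theorem immediately preceding this proposition: since $\alpha$ is strongly tight and $D_e\neq\emptyset$ for all $e\in E(S)\setminus\{0\}$, it suffices to check that $\mathcal{J}^{\ker\sigma}_s$ has a finite cover for every $s\in S$. I claim in fact that $\mathcal{J}^{\ker\sigma}_s$ always has the \emph{singleton} cover $\{s^*s\}$ when $\sigma(s)=e_\Gamma$, and is empty (covered by $\emptyset$, or by noting there is nothing to cover) otherwise. Indeed, if $\sigma(s)=e_\Gamma$ then $s\in\ker\sigma$, so for any $e\leq s^*s$ we get $se\in\ker\sigma$ (as $\ker\sigma$ is a subsemigroup and $e\in E(S)\subset\ker\sigma$), hence $s^*s\in\mathcal{J}^{\ker\sigma}_s$, and $\{s^*s\}$ is trivially a cover of the ideal it generates, namely all of $\mathcal{J}^{\ker\sigma}_s$. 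Conversely, if $\sigma(s)\neq e_\Gamma$, suppose $e\in\mathcal{J}^{\ker\sigma}_s$ with $e\neq 0$; then $se\in\ker\sigma$ and $se\neq 0$ (since $(se)^*se=e s^*s e=e\neq 0$ and $\alpha_{se}$ has nonempty domain $D_e$ because $e\neq 0$), so $\sigma(se)=e_\Gamma$, but $\sigma(se)=\sigma(s)\sigma(e)=\sigma(s)\neq e_\Gamma$, a contradiction; thus $\mathcal{J}^{\ker\sigma}_s=\{0\}$, which has $\{0\}$ (or the empty set, depending on convention) as a finite cover. In either case condition (3) of the theorem holds, so $\ker\sigma\ltimes_\alpha X$ is closed.

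The main obstacle, such as it is, is bookkeeping around the zero element and the excluded case $s=0$: the partial homomorphism is only defined on $S^\times$, so every step that applies $\sigma$ must first argue the relevant product is nonzero, and this is precisely where the standing assumption $D_e\neq\emptyset$ for $e\in E(S)\setminus\{0\}$ is needed — it converts ``the domain is nonempty'' into ``the semigroup element is nonzero.'' Once that translation is in hand, both parts reduce to the elementary fact that nonzero idempotents map to the group identity. I would organize the write-up so that this translation lemma (nonzero domain $\Leftrightarrow$ nonzero element, for the relevant composites) is stated once and reused in both parts.
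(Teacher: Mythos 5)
Your part (1) is essentially the paper's argument: reduce to $s\neq 0$, use $D_{s^*s}\neq\emptyset$ together with the covering condition to find $f\in F$ with $sf\neq 0$, and then compute $\sigma(s)=\sigma(s)\sigma(f)=\sigma(sf)=e$. For part (2) you take a genuinely different route. The paper defines a continuous cocycle $c_{\sigma}\colon S\ltimes_{\alpha}X\to\Gamma$ by $c_{\sigma}([s,x])=\sigma(s)$ and observes that $\ker\sigma\ltimes_{\alpha}X=c_{\sigma}^{-1}(e)$ is the preimage of a closed set; you instead compute the ideals $\mathcal{J}^{\ker\sigma}_s$ explicitly (all of $\{e\in E(S)\mid e\leq s^*s\}$ when $\sigma(s)=e$, and $\{0\}$ otherwise) and feed this into the closedness criterion. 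Your computation is correct, but there is one citation issue: the theorem giving the equivalence of closedness with ``$\mathcal{J}^T_s$ has a finite cover'' assumes the action is \emph{strongly tight}, which is not a hypothesis of this proposition (only $D_e\neq\emptyset$ for $e\neq 0$ is assumed), so you cannot invoke that equivalence as stated. The fix is immediate: your case analysis shows that $D(\mathcal{J}^{\ker\sigma}_s)$ equals $D_{s^*s}$ or $\emptyset$, both relatively closed in $D_{s^*s}$, so Proposition \ref{closedness of subgroupoid} (which needs no strong tightness) applies directly. The paper's cocycle argument buys a one-line proof and makes the grading by $\Gamma$ visible; your argument buys an explicit description of $\mathcal{J}^{\ker\sigma}_s$ and stays entirely inside the ideal and cover machinery of Section 2.
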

\begin{proof}
	First we show (1).
	One can see that $\ker\sigma$ is a wide subsemigroup of $S$ in a straightforward way.
	We show $\ker\sigma$ is $\alpha$-join closed.
	Take $s\in S$ and assume that there exists a finite set $F\subset E(S)$ such that $sF\subset\ker\sigma$ and $D_{s^*s}\subset \bigcup_{f\in F}D_f$.
	It suffices to show $s\in\ker\sigma$.
	We may assume that $s\not=0$.
	Then there exists $f\in F$ such that $D_{s^*s}\cap D_f\not=\emptyset$,
	which implies $sf\not=0$.
	Since we have $sf\in\ker\sigma$,
	it follows
	\[
	\sigma(s)=\sigma(s)e=\sigma(s)\sigma(f)=\sigma(sf)=e.
	\]
	Hence $s\in\ker\sigma$.
	
	Next we show (2).
	Although it is possible to apply Proposition \ref{closedness of subgroupoid},
 	we show (2) using a cocycle\footnote{A map from a groupoid to a group is called a cocycle if it preserves the products.} on a groupoid.
 	We define the map $c_{\sigma}\colon S\ltimes_{\alpha}X\to \Gamma$ by
 	\[
 	c_{\sigma}([s,x])=\sigma(s), [s,x]\in S\ltimes_{\alpha}X.
 	\]
 	Then $c_{\sigma}$ is a continuous cocycle.
 	One can see that
 	\[
 	\ker\sigma\ltimes_{\alpha}X= c_{\sigma}^{-1}(e)
 	\]
 	holds.
 	Hence $\ker\sigma\ltimes_{\alpha}X$ is closed in $S\ltimes_{\alpha}X$.
 	\qed
 	
\end{proof}


\section{Applications and examples}

\subsection{Inverse semigroups of compact bisections}

Let $G$ be an ample \'etale groupoid.
Recall that an open set $U\subset G$ is called a bisection if the restrictions $d|_U$ and $r|_U$ are homeomorphisms onto the images.
Let $I(G)$ denote the set of all compact bisections of $G$.
For $U,V\in I(G)$,
their product $UV$ is defined by
\[
UV\defeq\{\alpha\beta\in G\mid \alpha\in U, \beta\in V, d(\alpha)=r(\beta)\}.
\]
Then $UV$ belongs to $I(G)$.
It is known that $I(G)$ becomes an inverse semigroup.
Note that the inverse of $U\in I(G)$ is given by
\[
U^{-1}=\{\alpha^{-1}\in G\mid \alpha\in U\}.
\]
The order of $I(G)$ as an inverse semigroup coincides with the order defined by inclusion.
A pair $U,V\in I(G)$ is said to be compatible if $U^{-1}V$ and $UV^{-1}$ belongs to $E(I(G))$.
If $U,V\in I(G)$ are compatible,
$U\cup V$ is an element of $I(G)$.
Note that $U\cup V$ is the least upper bound of $\{U,V\}$.
Thus $I(G)$ admits joins of compatible pairs in $I(G)$.
A subsemigroup $T\subset I(G)$ is said to be join closed if all joins of compatible pair of $T$ also belongs to $T$.

For $U\in I(G)$,
we have a homeomorphism $\rho_U\colon d(U)\to r(U)$ defined by
\[
\rho_U(d(\alpha))=r(\alpha), \alpha\in U.
\]
Then the map $U\mapsto \rho_U$ defines an action $\rho\colon I(G)\curvearrowright G^{(0)}$.
One can see that $\rho$ is strongly tight.
The following theorem is essentially same as \cite[Theorem 2.8]{matsnev_resende_2010}.

\begin{thm}[c.f.\ {\cite[Theorem 2.8]{matsnev_resende_2010}}] \label{theorem about the structure of ample groupoid}
	Let $G$ be an ample \'etale groupoid.
	Then $G$ is isomorphic to $I(G)\ltimes_{\rho}G^{(0)}$.
\end{thm}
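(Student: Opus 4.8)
The plan is to build the isomorphism directly from the tautological map sending a germ of the action $\rho$ to an honest arrow of $G$. Concretely, I would define
\[
\Phi\colon I(G)\ltimes_{\rho}G^{(0)}\to G,\qquad \Phi([U,x])\defeq \text{the unique }\alpha\in U\text{ with }d(\alpha)=x,
\]
which makes sense precisely because $U$ is a bisection, so $d|_U$ is injective with image $d(U)=D_{U^{-1}U}^{\rho}$. First I would check $\Phi$ is well defined on $\sim$-classes: if $[U,x]=[V,x]$ then there is a compact bisection $W\ni x$ (an idempotent of $I(G)$, i.e.\ an open-compact subset of $G^{(0)}$) with $UW=VW$; restricting to the arrow over $x$ forces the two arrows to agree. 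Then I would verify $\Phi$ is a groupoid homomorphism: it sends units to units since $\rho_W=\id_{d(W)}$ for $W\in E(I(G))$, it intertwines $d$ by construction, it intertwines $r$ because $r(\Phi([U,x]))=\rho_U(x)=r(\alpha)$, and it is multiplicative because the product in $I(G)\ltimes_\rho G^{(0)}$ of $[U,\rho_V(x)]$ and $[V,x]$ is $[UV,x]$, whose image is the composite arrow $\alpha\beta$ with $\beta\in V$ over $x$ and $\alpha\in U$ over $\rho_V(x)=d(\alpha)$.

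Next I would establish that $\Phi$ is a bijection. For surjectivity, given $\gamma\in G$, étaleness and ampleness give a compact bisection $U\ni\gamma$, and then $\Phi([U,d(\gamma)])=\gamma$. For injectivity, suppose $\Phi([U,x])=\Phi([V,y])=\gamma$; then $x=d(\gamma)=y$, and both $U$ and $V$ are compact bisections containing $\gamma$, so $U\cap V$ is a compact bisection (an element of $I(G)$) containing $\gamma$; writing $W\defeq d(U\cap V)\in E(I(G))$ we get $x\in D_W$ and $UW=U\cap V=VW$ (here one uses that $UW$ is the part of $U$ sitting over $W$ as a set of arrows), hence $[U,x]=[V,x]$.

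Finally I would check that $\Phi$ is a homeomorphism. Since both groupoids are étale it suffices to check $\Phi$ restricts to a homeomorphism on basic open sets. A basic open set of $I(G)\ltimes_\rho G^{(0)}$ has the form $[U,\Omega]$ with $\Omega\subset d(U)$ open, and $\Phi([U,\Omega])=\{\alpha\in U\mid d(\alpha)\in\Omega\}=U\cap d^{-1}(\Omega)$, which is open in $G$; conversely $\Phi$ carries $[U,\Omega]$ onto this set homeomorphically because $d$ is a homeomorphism from this set onto $\Omega$ and $\Phi$ intertwines the source maps. This shows $\Phi$ is an open continuous bijective groupoid homomorphism, hence an isomorphism of topological groupoids.

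The main obstacle I expect is purely bookkeeping: being careful about the two meanings of "product" and "order" in $I(G)$ (composition of arrows versus intersection of sets, and the inverse-semigroup order versus inclusion), which the excerpt already flags, so that the identities $UW=U\cap r^{-1}(W)$-type manipulations used in the well-definedness and injectivity steps are justified. Once those set-theoretic identifications are in hand, every step is routine, and in fact the statement can also be quoted almost verbatim from \cite[Theorem 2.8]{matsnev_resende_2010}; the argument above is just the adaptation to the present notation and the strongly tight framework of Theorem \ref{main theorem}.
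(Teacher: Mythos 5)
Your proposal is correct and is essentially the paper's own argument: the paper defines $\Phi(\alpha)=[U_\alpha,d(\alpha)]$ and observes that its inverse is $\Psi([U,x])=d|_U^{-1}(x)$, which is exactly the map you construct, with the well-definedness, multiplicativity, bijectivity and openness checks that the paper leaves to the reader spelled out. (The only cosmetic caveat is in your injectivity step: rather than invoking $U\cap V\in I(G)$, it is safer to pick a compact open $W\subset d(U\cap V)$ containing $x$ using ampleness of $G^{(0)}$, which yields $UW=(U\cap V)W=VW$ without worrying about compactness of $U\cap V$.)
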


\begin{proof}
For $\alpha\in G$,
there exists $U_{\alpha}\in I(G)$ such that $\alpha\in U_{\alpha}$ since $G$ is ample.
Then $[U_{\alpha}, d(\alpha)]\in I(G)\ltimes_{\rho}G^{(0)}$ is independent of the choice of $U_{\alpha}$.
Thus we obtain the map
\[
\Phi\colon G\ni \alpha\mapsto [U_{\alpha},d(\alpha)]\in I(G)\ltimes_{\rho}G^{(0)}.
\]
One can see that $\Phi$ is an isomorphism as a morphism between \'etale groupoids. 
Indeed, the map $\Psi\colon I(G)\ltimes_{\rho}G^{(0)}\to G$ defined by $\Psi([U,x])= d_U^{-1}(x)$ is the inverse map of $\Phi$.
\qed
\end{proof}

\begin{lem}\label{lemma about theta join closed is equiv to join closed}
Let $G$ be an ample \'etale groupoid.
Then a wide subsemigroup $T\subset I(G)$ is $\rho$-join closed if and only if $T$ is join closed.
\end{lem}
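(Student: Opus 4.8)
The plan is to unwind both notions of ``join closed'' and show they demand exactly the same thing of $T$, using the identification $G \cong I(G)\ltimes_\rho G^{(0)}$ from Theorem \ref{theorem about the structure of ample groupoid} together with the fact that $\rho$ is strongly tight with basis $\{D_e^\rho\}_{e \in E(I(G))} = \{ U : U \in E(I(G))\}$, i.e. the compact open subsets of $G^{(0)}$. The key translation is that for $U \in I(G)$ one has $D_{U^*U}^\rho = d(U)$, and for an idempotent $f = V \in E(I(G))$ the product $Uf$ equals $U \cap d^{-1}(V)$, so $D_{(Uf)^*(Uf)}^\rho = d(U)\cap V$. Thus a finite set $F \subset E(I(G))$ with $D_{U^*U}^\rho \subset \bigcup_{f\in F} D_f^\rho$ is precisely a finite family of compact open sets covering $d(U)$.

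First I would prove the ``only if'' direction: assume $T$ is $\rho$-join closed and let $U, V \in T$ be a compatible pair; I must show $U \cup V \in T$. Set $W = U \cup V \in I(G)$. Then $d(W) = d(U) \cup d(V)$, and I can take $F = \{d(U), d(V)\} \subset E(I(G))$, which covers $d(W) = D_{W^*W}^\rho$. It remains to check $W f \in T$ for each $f \in F$: but $W \cdot d(U) = W \cap d^{-1}(d(U)) = U$ (since on $d(U)$ the bisection $W$ agrees with $U$ by compatibility), and likewise $W \cdot d(V) = V$, both of which lie in $T$. Hence the $\rho$-join closedness criterion yields $W \in T$, so $T$ is join closed.

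For the converse, assume $T$ is join closed and let $U \in I(G)$ admit a finite $F \subset E(I(G))$ with $Uf \in T$ for all $f \in F$ and $d(U) = D_{U^*U}^\rho \subset \bigcup_{f \in F} D_f^\rho = \bigcup_{f\in F} f$. Each $Uf = U \cap d^{-1}(f)$ is a compact bisection contained in $U$, and the family $\{Uf\}_{f \in F}$ covers $U$ because the $f$'s cover $d(U)$. Any two members of this family are compatible (both are subsets of the single bisection $U$, so $U_1^{-1}U_2 \subset U^{-1}U \in E(I(G))$ and similarly $U_1 U_2^{-1} \subset U U^{-1}$), and the join of the whole finite family is $\bigcup_{f \in F} Uf = U$. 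Since $T$ is join closed and closed under finite joins of compatible families (iterating the pairwise join), we get $U \in T$. Combined with the ``only if'' part of the $\alpha$-join closed condition, which always holds, this gives the $\rho$-join closedness of $T$.

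The main obstacle I expect is bookkeeping rather than conceptual: carefully justifying the identities $Uf = U \cap d^{-1}(f)$ and $(U\cup V)\cdot d(U) = U$ under the groupoid-to-crossed-product identification, and confirming that ``join closed'' (stated for pairs) does give closure under finite joins of compatible families by induction — one must check that if $U_1, \dots, U_n$ are pairwise compatible then $U_1 \cup \cdots \cup U_{n-1}$ is still compatible with $U_n$, which follows since $(\bigcup_{i<n} U_i)^{-1} U_n = \bigcup_{i<n} U_i^{-1} U_n$ is a union of idempotents with a common upper bound, hence an idempotent. None of this is deep, but it is the part that needs the precise definitions from Section 1 and Theorem \ref{theorem about the structure of ample groupoid}.
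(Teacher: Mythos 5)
Your argument is correct and follows essentially the same route as the paper: for one direction you restrict $U\cup V$ by the idempotents $d(U)=U^{-1}U$ and $d(V)=V^{-1}V$ to recover $U$ and $V$ and invoke $\rho$-join closedness; for the other you observe that the $Uf$, $f\in F$, are pairwise compatible subsets of $U$ with join $U$. The only difference is that you spell out the bookkeeping (the identity $Uf=U\cap d^{-1}(f)$ and the induction from pairwise joins to finite joins) that the paper leaves implicit, and these details check out.
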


\begin{proof}
Assume that $T\subset I(G)$ is join closed.
Take $U\in I(G)$ and there exists a finite set $\mathcal{F}\subset E(I(G))$ such that $U \mathcal{F}\in T$ and $D_{U^*U}^{\rho}\subset \bigcup_{O\in \mathcal{F}}D_O^{\rho}$.
Observe that elements in $U\mathcal{F}$ are pairwisely compatible and $\bigvee_{O\in \mathcal{F}} UO=U$ holds.
Since $T$ is join closed,
$U$ belongs to $T$.
	
To show the converse,
assume that $T\subset I(G)$ is $\rho$-join closed.
Let $U,V\in T$ be compatible.
Put $\mathcal{F}\defeq \{U^{-1}U, V^{-1}V\}\subset E(I(G))$.
Then one can see that $(U\cup V) \mathcal{F}=\{U,V\}\subset T$ hold.
Since we have \[
\dom(\rho_{U\cup V})=d(U)\cup d(V)=\dom \rho_U\cup \dom\rho_V,\]
we obtain $U\cup V\in T$ by the $\rho$-closedness of $T$.
\qed
\end{proof}

Theorem \ref{main theorem},
Theorem \ref{theorem about the structure of ample groupoid} and Lemma \ref{lemma about theta join closed is equiv to join closed} yield the next corollary.

\begin{cor} \label{Corollary about inverse semigroups of compact bisections}
	Let $G$ be an ample \'etale groupoid.
	Then there is a correspondence between the set of all open wide subgroupoids of $G$ and the set of all wide join closed subsemigroups of $I(G)$.
\end{cor}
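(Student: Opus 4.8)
The plan is to obtain Corollary \ref{Corollary about inverse semigroups of compact bisections} by composing the two already-established bijections with a bookkeeping step translating "$\rho$-join closed" into "join closed". Concretely, Theorem \ref{theorem about the structure of ample groupoid} gives an isomorphism $\Phi\colon G\to I(G)\ltimes_{\rho}G^{(0)}$ of \'etale groupoids, and this isomorphism carries open wide subgroupoids of $G$ bijectively onto open wide subgroupoids of $I(G)\ltimes_{\rho}G^{(0)}$ (an isomorphism of topological groupoids preserves the unit space, openness, and the subgroupoid relations, so the map $H\mapsto\Phi(H)$ is a bijection between the respective sets of open wide subgroupoids, with inverse $H'\mapsto\Phi^{-1}(H')$). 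So it suffices to describe the open wide subgroupoids of $I(G)\ltimes_{\rho}G^{(0)}$.

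Next I would invoke Theorem \ref{main theorem} applied to the inverse semigroup $S=I(G)$, the space $X=G^{(0)}$, and the action $\rho$, which is strongly tight (as remarked just before Theorem \ref{theorem about the structure of ample groupoid}). Theorem \ref{main theorem} then provides mutually inverse bijections between $\mathcal{H}$, the set of all open wide subgroupoids of $I(G)\ltimes_{\rho}G^{(0)}$, and $\mathcal{T}$, the set of all wide $\rho$-join closed subsemigroups of $I(G)$, given by $T\mapsto T\ltimes_{\rho}G^{(0)}$ and $H\mapsto T_H$. Finally, Lemma \ref{lemma about theta join closed is equiv to join closed} identifies $\mathcal{T}$ with the set of all wide join closed subsemigroups of $I(G)$: a wide subsemigroup $T\subset I(G)$ is $\rho$-join closed if and only if it is join closed. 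Chaining these three identifications, $H\mapsto T_{\Phi(H)}$ (equivalently, $T\mapsto \Phi^{-1}(T\ltimes_{\rho}G^{(0)})$) gives the desired correspondence between the set of all open wide subgroupoids of $G$ and the set of all wide join closed subsemigroups of $I(G)$.

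There is essentially no serious obstacle here, since all the work has been done in the three quoted results; the only point requiring a line of justification is that a groupoid isomorphism induces a bijection between the posets of open wide subgroupoids, which follows immediately from $\Phi$ being a homeomorphism that restricts to the identity-type identification on unit spaces and respects multiplication and inversion. If I wanted to be slightly more careful I would note that $\Phi$ in Theorem \ref{theorem about the structure of ample groupoid} satisfies $\Phi(x)=[U,x]$ for $x\in G^{(0)}$ (with $U$ any compact bisection containing $x$), so $\Phi(G^{(0)})$ is exactly the unit space of $I(G)\ltimes_{\rho}G^{(0)}$, hence wideness is preserved in both directions. I would then simply state that the composite is the claimed correspondence and leave the purely formal verification of mutual inverseness to the reader, as it is inherited from the mutual inverseness in Theorem \ref{main theorem}.
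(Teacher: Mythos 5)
Your proposal is correct and follows exactly the route the paper takes: the paper derives this corollary by citing Theorem \ref{main theorem}, Theorem \ref{theorem about the structure of ample groupoid} and Lemma \ref{lemma about theta join closed is equiv to join closed}, which is precisely the composition of bijections you describe. Your write-up merely makes explicit the routine verification (that the groupoid isomorphism preserves open wide subgroupoids) that the paper leaves implicit.
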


\subsection{Polycyclic monoids} \label{subsection of Polycyclic monoids}
 
We apply Theorem \ref{main theorem} to the polycyclic monoids $P_n$.

\begin{defi}
	Let $n\geq 2$ be a natural number.
	The polycyclic monoid $P_n$ is an inverse monoid defined by
	\[
	P_n\defeq \i<s_1,s_2,\dots,s_n \mid s_i^*s_j=\delta_{i,j} 1>.
	\]
\end{defi}

See \cite{Polyciclic} for details on the polycyclic monoids.

Set $\Sigma_n\defeq \{1,2,\dots,n\}$ and
\[\Sigma_n^{\N}\defeq\{(x_i)_{i=1}^{\infty}\mid x_i\in \Sigma_n \text{ for all $i\in\N$}\}.\]
It follows that $\Sigma_n^{\N}$ is a compact Hausdorff space from Tychonoff's theorem.
We write a finite sequence on $\Sigma_n$ like $\mu=(\mu_1,\mu_2,\dots,\mu_l)$,
where each $\mu_j$ is an element of $\Sigma_{n}$.
Here, $l\in\N$ is called the length of $\mu$, which we denote by $\lvert \mu\rvert$.
The only element of length zero is denoted by $\varepsilon$,
which is called the empty word.
We denote the set of all finite sequence on $\Sigma_n$ by $\Sigma_n^*$.
For $\mu\in \Sigma_n^*$,
we define a cylinder set $C(\mu)\subset \Sigma_n^{\N}$ as the set of all infinite sequences which begin with $\mu$ :
\[
C(\mu)\defeq \{(x_i)_{i=1}^{\infty}\in \Sigma_n^{\N}\mid x_i=\mu_i \text{ for all $i=1,2,\dots,\lvert\mu\rvert$}\}.
\]
We represent an element of $C(\mu)$ as $\mu x$ with $x\in \Sigma_n^{\N}$.
Each $C(\mu)$ is a compact open set of $\Sigma_n^{\N}$ and the family of all $C(\mu)$ is a basis of $\Sigma_n^{\N}$.
For $\mu\in\Sigma_n^*$,
we define $s_{\mu}\in P_n$ as
\[
s_{\mu}\defeq s_{\mu_1}s_{\mu_2}\cdots s_{\mu_{\lvert\mu\rvert}}.
\]
For the empty word $\varepsilon\in\Sigma_n^{\N}$,
we define $s_{\varepsilon}=1$.
It is known that an element of $P_n\setminus\{0\}$ is represented as the form $s_{\mu}s_{\nu}^*$ for unique $\mu,\nu\in\Sigma_n^*$.

Now we define an action $\beta\colon P_n\curvearrowright \Sigma_n^{\N}$.
For $s_{\mu}s_{\nu}^*\in P_n$,
define $\beta_{s_{\mu}s_{\nu}^*}\colon C(\mu)\to C(\nu)$ by
\[
\beta_{s_{\mu}s_{\nu}^*}(\nu x)=\mu x , x\in\Sigma_n^{\N}.
\]
Then the map $s_{\mu}s_{\nu}^*\mapsto \beta_{s_{\mu}s_{\nu}^*}$ defines an action $\beta\colon P_n\curvearrowright \Sigma_n^{\N}$.
Since the domain of $s_{\mu}s_{\mu}^*$ coincides with $C(\mu)$,
the action $\beta$ is strongly tight.

For $k,l\in\N$,
we define
\[
P_n^{k,l}\defeq\{s_{\mu}s_{\nu}^*\in P_n\mid\lvert\mu\rvert=k,\lvert\nu\rvert=l\}.
\]
Observe that
\[
M_n\defeq \bigcup_{k\in\N}P_n^{k,k}\cup\{0\}=\{s_{\mu}s_{\nu}^*\in P_n \mid \lvert{\mu}\rvert=\lvert\nu\rvert\}\cup\{0\}
\]
is a wide subsemigroup of $P_n$.

We investigate $\beta$-join closed subsemigroups $T\subset P_n$ such that $M_n\subset T$.
For $m\in \N$,
define
\[
P_n^m\defeq \bigcup_{k-l\in m\Z} P_n^{k,l}\cup\{0\}.
\]

Then one can see that $P_n^m$ is a $\beta$-join closed subsemigroup which contains $M_n$.
Notice that $P_n^{0}=M_n$.
Conversely, we obtain the following proposition.
\begin{prop}\label{inverse subsemigroup of polycyclic monoid}
Assume that $T\subsetneq P_n$ is a $\beta$-join closed subsemigroup which contains $M_n$.
Then $T=P_n^m$ holds for some $m\in \N$.
\end{prop}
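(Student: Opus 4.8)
The plan is to parametrise $T$ by the set of ``degrees'' it realises. Write each nonzero element of $P_n$ in its normal form $s_\mu s_\nu^*$ and put $\deg(s_\mu s_\nu^*)\defeq\lvert\mu\rvert-\lvert\nu\rvert\in\Z$. I would show that whether $s_\mu s_\nu^*\in T$ depends only on $\deg(s_\mu s_\nu^*)$, and that $A\defeq\{\deg(s)\mid s\in T\setminus\{0\}\}$ is a subgroup of $\Z$. Since every subgroup of $\Z$ equals $m\Z$ for a unique $m\in\N$, this gives $T\setminus\{0\}=\{s_\mu s_\nu^*\mid\lvert\mu\rvert-\lvert\nu\rvert\in m\Z\}=P_n^m\setminus\{0\}$, hence $T=P_n^m$; and $T\subsetneq P_n=P_n^1$ forces $m\neq1$. (The map $\deg$ is in fact a partial homomorphism $P_n^{\times}\to\Z$ in the sense of the previous subsection, which is why this works.)

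The first batch of steps is bookkeeping with the relations $s_i^*s_j=\delta_{i,j}1$, which give $s_\mu^*s_\mu=1$ for all $\mu$, and with the identification $D^{\beta}_{s_\mu s_\mu^*}=C(\mu)$. Using only that $T$ is an inverse-closed subsemigroup containing $M_n$, I would establish: (a) if $\lvert\mu'\rvert=\lvert\mu\rvert$ and $\lvert\nu'\rvert=\lvert\nu\rvert$ then $s_\mu s_\nu^*\in T\iff s_{\mu'}s_{\nu'}^*\in T$, via the factorisation $s_{\mu'}s_{\nu'}^*=(s_{\mu'}s_\mu^*)(s_\mu s_\nu^*)(s_\nu s_{\nu'}^*)$, whose outer factors lie in $M_n$; (b) (diagonal shift up) $s_\mu s_\nu^*\in T$ implies $s_{\mu\delta}s_{\nu\delta}^*=(s_\mu s_\nu^*)(s_{\nu\delta}s_{\nu\delta}^*)\in T$ for every $\delta\in\Sigma_n^*$, since $s_{\nu\delta}s_{\nu\delta}^*\in M_n\subset T$; (c) (transitivity) $s_\mu s_\nu^*,s_\nu s_\rho^*\in T$ imply $s_\mu s_\rho^*=(s_\mu s_\nu^*)(s_\nu s_\rho^*)\in T$. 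By (a) I may record membership as a relation $B$ on $\N$, with $(k,l)\in B\iff s_\mu s_\nu^*\in T$ for some (hence any) $\lvert\mu\rvert=k$, $\lvert\nu\rvert=l$; it is reflexive by $M_n\subset T$, symmetric by inverse-closedness, transitive by (c), and invariant under $(k,l)\mapsto(k+1,l+1)$ by (b).

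The crux — and the only place the $\beta$-join closed hypothesis does real work — is the opposite shift: $(k+1,l+1)\in B\Rightarrow(k,l)\in B$. Given $\mu',\nu'$ with $\lvert\mu'\rvert=k$, $\lvert\nu'\rvert=l$, I would apply the $\beta$-join closed condition to $s=s_{\mu'}s_{\nu'}^*$ with the finite set $F=\{\,s_{\nu' j}s_{\nu' j}^*\mid 1\le j\le n\,\}\subset E(P_n)$: here $D_{s^*s}=C(\nu')=\bigcup_{j=1}^n C(\nu' j)=\bigcup_{f\in F}D_f$, while $s\cdot s_{\nu' j}s_{\nu' j}^*=s_{\mu' j}s_{\nu' j}^*$ has length pair $(k+1,l+1)$, hence lies in $T$ by hypothesis together with (a); so the ``if'' direction of $\beta$-join closedness puts $s\in T$, i.e. $(k,l)\in B$. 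Combining this with the upward shift, membership in $B$ depends only on $k-l$, so $A=\{k-l\mid(k,l)\in B\}$ satisfies $0\in A$ and $A=-A$; and choosing representatives of $d_1,d_2\in A$ shifted far enough into $\N\times\N$ to be chainable, transitivity gives $d_1+d_2\in A$. Thus $A$ is a subgroup of $\Z$ and the conclusion follows as in the first paragraph. I expect the backward-shift lemma to be the only non-routine point; the remainder is just organising the normal-form identities and the prefix-code combinatorics of the cylinders $C(\mu)$.
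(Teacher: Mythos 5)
Your proof is correct and follows essentially the same route as the paper: your items (a)--(c) and the two diagonal shifts are exactly the paper's preparatory lemma on wide subsemigroups containing $M_n$, with the backward shift via the cover $C(\nu')=\bigcup_{j}C(\nu' j)$ being the one place $\beta$-join closedness enters, just as there. The only cosmetic difference is that you package the final step as the classification of subgroups of $\Z$, whereas the paper runs the underlying minimality-plus-division argument explicitly.
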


In order to prove this proposition,
we prepare a few lemmas.
The next lemma follows from straightforward calculations.

\begin{lem}\label{lemma of polycyclic1}
For $i,j,k,l\in \N$,
we have
\begin{align*}
P_n^{i,j}P_n^{k,l}=
\begin{cases}
P_n^{i+k-j,l}\cup\{0\} & (k\geq j), \\
P_n^{i,j-k+l}\cup\{0\} & (k\leq j).
\end{cases}
\end{align*}

\end{lem}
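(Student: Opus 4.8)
The plan is to reduce everything to a single normal-form computation for the ``middle'' product $s_\nu^* s_\rho$, since for $a=s_\mu s_\nu^*\in P_n^{i,j}$ and $b=s_\rho s_\tau^*\in P_n^{k,l}$ one has $ab=s_\mu(s_\nu^* s_\rho)s_\tau^*$. First I would establish, for words $\nu,\rho\in\Sigma_n^*$ with $\lvert\nu\rvert=j$ and $\lvert\rho\rvert=k$, the reduction rule: $s_\nu^* s_\rho=s_{\rho'}$ if $\rho=\nu\rho'$ for some word $\rho'$ (so necessarily $k\geq j$ and $\lvert\rho'\rvert=k-j$); $s_\nu^* s_\rho=s_{\nu'}^*$ if $\nu=\rho\nu'$ for some word $\nu'$ (so necessarily $k\leq j$ and $\lvert\nu'\rvert=j-k$); and $s_\nu^* s_\rho=0$ otherwise, i.e. when neither of $\nu,\rho$ is a prefix of the other. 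This is proved by induction on $\min(j,k)$, peeling off the innermost factors and using the defining relation $s_a^* s_b=\delta_{a,b}1$: if the leading letters of $\nu$ and $\rho$ disagree the product collapses to $0$, while if they agree the relation gives $s_{\nu_1}^* s_{\rho_1}=1$, which cancels and shortens both words by one letter. In particular $s_\nu^* s_\nu=1$ for every word $\nu$, a fact I will reuse below.

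Next I would prove the inclusion from left to right by substituting this rule. In the case $k\geq j$, a nonzero product forces $\rho=\nu\rho'$ with $\lvert\rho'\rvert=k-j$, whence $ab=s_\mu s_{\rho'}s_\tau^*=s_{\mu\rho'}s_\tau^*$; here $\lvert\mu\rho'\rvert=i+(k-j)$ and $\lvert\tau\rvert=l$, so $ab\in P_n^{i+k-j,\,l}$. In the case $k\leq j$, a nonzero product forces $\nu=\rho\nu'$ with $\lvert\nu'\rvert=j-k$, whence $ab=s_\mu s_{\nu'}^* s_\tau^*=s_\mu s_{\tau\nu'}^*$ using $(s_\tau s_{\nu'})^*=s_{\nu'}^* s_\tau^*$; here $\lvert\mu\rvert=i$ and $\lvert\tau\nu'\rvert=l+(j-k)$, so $ab\in P_n^{i,\,j-k+l}$. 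Since some products vanish, the element $0$ must appear on the right-hand side, which accounts for the $\cup\{0\}$.

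Finally I would prove the reverse inclusion by exhibiting each target element as an actual product. For $k\geq j$, take $s_\sigma s_\tau^*\in P_n^{i+k-j,\,l}$, split $\sigma=\mu\rho'$ with $\lvert\mu\rvert=i$ and $\lvert\rho'\rvert=k-j$, choose any word $\nu$ of length $j$, and set $\rho\defeq\nu\rho'$ (of length $k$); then $s_\mu s_\nu^*\cdot s_\rho s_\tau^*=s_\mu(s_\nu^* s_\nu)s_{\rho'}s_\tau^*=s_{\mu\rho'}s_\tau^*=s_\sigma s_\tau^*$ by $s_\nu^* s_\nu=1$. The case $k\leq j$ is symmetric: given $s_\sigma s_\pi^*\in P_n^{i,\,j-k+l}$, split $\pi=\tau\nu'$ with $\lvert\tau\rvert=l$ and $\lvert\nu'\rvert=j-k$, pick any $\rho$ of length $k$, put $\nu\defeq\rho\nu'$, and use $s_\rho^* s_\rho=1$. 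Choosing incompatible inner words (for instance $\nu$ whose leading letter differs from that of $\rho$) realizes $0$, so the product set equals the claimed union exactly. The computation is entirely mechanical; the only point demanding care is the prefix bookkeeping in the reduction rule and keeping the two cases straight where they overlap at $k=j$, which is why the statement is flagged as following from straightforward calculations.
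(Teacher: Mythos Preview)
Your argument is exactly the straightforward calculation the paper has in mind; the paper omits the proof entirely, and your normal-form reduction of $s_\nu^* s_\rho$ followed by the two inclusions is the standard route. One small caveat worth noting: your step realizing $0$ by choosing $\nu,\rho$ with distinct leading letters needs $j\geq 1$ and $k\geq 1$; when $j=0$ or $k=0$ no product of elements from $P_n^{i,j}$ and $P_n^{k,l}$ vanishes, so strictly speaking the product set equals $P_n^{i+k-j,l}$ (resp.\ $P_n^{i,j-k+l}$) without the extra $\{0\}$ in those edge cases. This is a harmless imprecision in the lemma as stated rather than a flaw in your reasoning, and it has no effect on how the lemma is used later.
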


\begin{lem}\label{lemma of polycyclic2}
	Let $T\subset P_n$ be a wide subsemigroup which contains $M_n$.
	Then the following statements hold :
	\begin{enumerate}
		\item If $s_{\mu}s_{\nu}^*\in T$ holds, then $P_n^{\lvert\mu\rvert,\lvert\nu\rvert}\subset T$ holds.
		\item $P_n^{k,l}\subset T$ implies $P_n^{l,k}\subset T$.
		\item $P_n^{k,l}\subset T$ implies $P_n^{k+1,l+1}\subset T$.
	\end{enumerate}
	Moreover, if $T$ is $\beta$-join closed,
	then the following holds :
	\begin{enumerate}
		\item[(4)] If $P_n^{k,l}$ holds for $k,l\in\Z_{>0}$,
		then $P_n^{k-1,l-1}\subset T$ holds.
	\end{enumerate}

\end{lem}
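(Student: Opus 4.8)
The plan is to obtain (1)--(3) by direct manipulation inside $P_n$ using the defining relations $s_i^*s_j=\delta_{i,j}1$ (so in particular $s_\mu^*s_\mu=1$ for every $\mu\in\Sigma_n^*$), and to obtain (4) by feeding a well-chosen finite family of idempotents into the $\beta$-join closedness of $T$.

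For (1), I would fix an arbitrary element $s_{\mu'}s_{\nu'}^*\in P_n^{\lvert\mu\rvert,\lvert\nu\rvert}$ and note that $s_{\mu'}s_\mu^*$ and $s_\nu s_{\nu'}^*$ each have matching word-lengths in their two coordinates, hence lie in $M_n\subset T$. Since $s_\mu^*s_\mu=s_\nu^*s_\nu=1$, the triple product $(s_{\mu'}s_\mu^*)(s_\mu s_\nu^*)(s_\nu s_{\nu'}^*)$ collapses to $s_{\mu'}s_{\nu'}^*$, which therefore lies in $T$; as $\mu',\nu'$ were arbitrary, $P_n^{\lvert\mu\rvert,\lvert\nu\rvert}\subset T$. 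Statement (2) is immediate: $T$ is closed under the involution of $P_n$ and $(s_\mu s_\nu^*)^*=s_\nu s_\mu^*$, so $P_n^{k,l}\subset T$ forces $P_n^{l,k}\subset T$. For (3), I would invoke Lemma \ref{lemma of polycyclic1}: applied with $l+1\geq l$ it gives $P_n^{k,l}\,P_n^{l+1,l+1}=P_n^{k+1,l+1}\cup\{0\}$, and since $P_n^{k,l}\subset T$ by hypothesis, $P_n^{l+1,l+1}\subset M_n\subset T$, and $T$ is a subsemigroup, we conclude $P_n^{k+1,l+1}\subset T$.

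The only step that genuinely uses $\beta$-join closedness is (4). Assuming $P_n^{k,l}\subset T$ with $k,l>0$, I would fix $s_\mu s_\nu^*\in P_n^{k-1,l-1}$ and verify the ``if'' clause in the definition of $\beta$-join closedness for $s\defeq s_\mu s_\nu^*$. First, $s^*s=s_\nu(s_\mu^*s_\mu)s_\nu^*=s_\nu s_\nu^*$, so $D_{s^*s}=C(\nu)$. I would then take $F\defeq\{\,s_{\nu i}s_{\nu i}^*\mid i\in\Sigma_n\,\}$, a finite subset of $E(P_n)$; since $C(\nu)=\bigcup_{i\in\Sigma_n}C(\nu i)$, the covering condition $D_{s^*s}\subset\bigcup_{f\in F}D_f$ holds. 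Finally, using $s_\nu^*s_\nu=1$, one computes $(s_\mu s_\nu^*)(s_{\nu i}s_{\nu i}^*)=s_{\mu i}s_{\nu i}^*$, which has word-lengths $k$ and $l$, hence lies in $P_n^{k,l}\subset T$, for every $i\in\Sigma_n$. So $\beta$-join closedness forces $s\in T$, and since $s$ was an arbitrary element of $P_n^{k-1,l-1}$, we get $P_n^{k-1,l-1}\subset T$.

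I do not anticipate a real obstacle: (1)--(3) are bookkeeping with the polycyclic relations, and (4) is equally short once one spots that the cover to use is the family $\{s_{\nu i}s_{\nu i}^*\}_{i\in\Sigma_n}$ of one-letter refinements of the source idempotent $s_\nu s_\nu^*$. The one point to keep track of is that, thanks to $s_\nu^*s_\nu=1$, these refinements push $s_\mu s_\nu^*$ precisely up to the level $P_n^{k,l}$ that is assumed to lie in $T$ --- which is exactly why the hypothesis $k,l>0$ is needed, so that $s_{\mu i}$ and $s_{\nu i}$ have lengths $k$ and $l$.
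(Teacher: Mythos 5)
Your proposal is correct and follows essentially the same route as the paper: the same sandwiching by $M_n$-elements for (1), involution-closedness for (2), right-multiplication by one-letter-extended idempotents for (3), and the cover $\{s_{\nu i}s_{\nu i}^*\}_{i\in\Sigma_n}$ of $D_{s_\nu s_\nu^*}=C(\nu)$ fed into $\beta$-join closedness for (4). The only cosmetic difference is that in (3) you package the computation via Lemma \ref{lemma of polycyclic1} where the paper computes $s_{\mu x}s_{\nu x}^*=s_{\mu}s_{\nu}^*s_{\nu x}s_{\nu x}^*$ directly and then applies part (1).
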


\begin{proof}
	(1) Assume $\lvert\mu\rvert=\lvert\mu'\rvert$ and $\lvert\nu\rvert=\lvert\nu'\rvert$ hold for $\mu',\nu'\in\Sigma_n^{\N}$.
	Then we have $s_{\mu'}s_{\mu}^*, s_{\nu}s_{\nu'}^*\in M_n\subset T$.
	Since we assume $s_{\mu}s_{\nu}^*\in T$,
	it follows
	\[
	s_{\mu'}s_{\nu'}=s_{\mu'}s_{\mu}^*s_{\mu}s_{\nu}^*s_{\nu}s_{\nu'}^*\in T.
	\]
	Hence we have $P_n^{\lvert\mu\rvert,\lvert\nu\rvert}\subset T$.
	
	(2) is clear,
	so we show (3) next.
	Take $s_{\mu}s_{\nu}^*\in P_n^{k,l}$ and $x,y\in \Sigma_n$ arbitrarily.
	Then we have
	\[s_{\mu x}s_{\nu x}^*=s_{\mu }s_{\nu}^* s_{\nu x}s_{\nu x}^*\in T,\]
	where we use the fact $s_{\nu x}s_{\nu x}^*\in M_n\subset T$.
	Using (1) in the above,
	we obtain $P_n^{k+1.l+1}\subset T$.
	
	Finally we show (4) under the assumption that $T$ is $\beta$-join closed.
	Take $s_{\mu}s_{\nu}^*\in P_n^{k-1,l-1}$.
	For each $x\in\Sigma_n$,
	we have
	\[
	s_{\mu}s_{\nu}^*s_{\nu x}s_{\nu x}^*=s_{\mu x}s_{\nu x}^*\in T,
	\]
	since we assume  $P_n^{k,l}\subset T$.
	Observe that
	\[
	D^{\beta}_{(s_{\mu}s_{\nu}^*)^*s_{\mu}s_{\nu}^* }=D^{\beta}_{s_{\nu}s_{\nu}^*}=\bigcup_{x\in \Sigma_n}D^{\beta}_{s_{\nu x}s_{\nu x}^*}(=C(\nu)).
	\]
	Since $T$ is $\beta$-join closed,
	we have $s_{\mu}s_{\nu}^*\in T$.
	Hence we have shown $P_n^{k-1,l-1}\subset T$.
	\qed
\end{proof}

\begin{proof}[\sc{Proof of Proposition \ref{inverse subsemigroup of polycyclic monoid}}]
	We may assume that $M_n\subsetneq T$.
	We define
	\[
	m\defeq \min \{\left\lvert\lvert\mu\rvert- \lvert\nu\rvert \right\rvert\in\N\mid  s_{\mu}s_{\nu}^*\in T\setminus M_n \}(>0).
	\]
	We show $T=P_n^m$.
	By the definition of $m$,
	there exists $s_{\mu}s_{\nu}^*\in T$ such that $\lvert\lvert\mu\rvert-\lvert\nu\rvert\rvert=m$.
	Since $T$ is closed under the inverse,
	we may assume $\lvert\mu\rvert-\lvert\nu\rvert=m$.
	Using (1) of Lemma \ref{lemma of polycyclic2},
	we have $P_m^{\lvert\mu\rvert,\lvert\nu\rvert}\subset T$.
	Applying (4) of Lemma \ref{lemma of polycyclic2} repeatedly,
	we obtain $P_n^{m,0}\subset T$ and it follows $P_n^{0,m}\subset T$ from (2) of Lemma \ref{lemma of polycyclic2}.
	Now one can see that $P_n^{k,l}\subset T$ holds for $k,l$ with $k-l\in m\Z$.
	Hence we obtain $P_n^m\subset T$.
	
	Next we show $T\subset P_n^m$.
	Assume that there exists $s_{\mu}s_{\nu}^*\in T$ such that $s_{\mu}s_{\nu}^*\not\in P_n^{m}$.
	We may assume that $\lvert\mu\rvert>\lvert\nu\rvert$.
	Take $a,b\in\N$ such that $\lvert\mu\rvert-\lvert\nu\rvert=am+b$ and $1\leq b\leq m-1$.
	We have $P_n^{\lvert\mu\rvert,\lvert\nu\rvert}\subset T$ by (1) of Lemma \ref{lemma of polycyclic2}.
	Using (4) of Lemma \ref{lemma of polycyclic2} repeatedly,
	we have $P_n^{am+b,0}\subset T$.
	Since we have $P_n^{m,0}\subset T$,
	it follows
	\[P_n^{(a-1)m+b,0}\subset P_n^{am+b,0}P_n^{m,0}\subset T,\]
	where we used Lemma \ref{lemma of polycyclic1}.
	Repeating this argument inductively,
	we obtain $P_n^{b,0}\subset T$.
	This contradicts to the minimality of $m$.
	Now we have shown $T=P_n^m$.
	\qed
\end{proof}

By Theorem \ref{main theorem} and Proposition \ref{inverse subsemigroup of polycyclic monoid},
an open proper intermediate subgroupoid between $P_n\ltimes_{\alpha}\Sigma_n^{\N}$ and $M_n\ltimes_{\beta}\Sigma_n^{\N}$ is given by the form $P_n^m\ltimes_{\beta}\Sigma_n^{\N}$ for some $m\in\N$.
Now we see $P_n^m\ltimes_{\beta}\Sigma_n^{\N}$ is closed. 
Observe that $P_n\ltimes_{\beta}\Sigma_n^{\N}$ has a continuous cocycle $c\colon P_n\ltimes_{\beta} \Sigma_n^{\N}\to\Z$ defined by $c([s_{\mu}s_{\nu}^*,x])=\lvert\mu\rvert-\lvert\nu\rvert$.
Since we have $P_n^m\ltimes_{\beta}\Sigma_n^{\N}=c^{-1}(m\Z)$,
it follows that $P_n^m\ltimes_{\beta}\Sigma_n^{\N}$ is a closed subset of $P_n\ltimes_{\beta}\Sigma_n^{\N}$.
Hence we obtain the next proposition.
\begin{prop}\label{closedness of subsemigroup of Pn}
Every open wide normal subgroupoid of $P_n\ltimes_{\beta}\Sigma_n^{\N}$ which contains $M_n\ltimes_{\beta}\Sigma_m^{\N}$ is closed.
\end{prop}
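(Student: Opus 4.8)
The plan is to reduce the statement to the classification of subsemigroups already obtained and then to recognise the subgroupoid as the preimage of a clopen set under the continuous cocycle introduced just before the statement.

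First I would take an open wide subgroupoid $H$ of $G\defeq P_n\ltimes_\beta\Sigma_n^{\N}$ with $M_n\ltimes_\beta\Sigma_n^{\N}\subset H$. Since $\beta$ is strongly tight, Theorem \ref{main theorem} gives $H=T_H\ltimes_\beta\Sigma_n^{\N}$, where $T_H$ is a wide $\beta$-join closed subsemigroup of $P_n$; moreover $M_n\subset T_H$, because for $t\in M_n$ one has $[t,D_{t^*t}]\subset M_n\ltimes_\beta\Sigma_n^{\N}\subset H$, so $t\in T_H$. If $T_H=P_n$ then $H=G$, which is trivially closed, so I may assume $T_H\subsetneq P_n$. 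Then Proposition \ref{inverse subsemigroup of polycyclic monoid} forces $T_H=P_n^m$ for some $m\in\N$, and it remains to show that $P_n^m\ltimes_\beta\Sigma_n^{\N}$ is closed in $G$.

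For this I would use the cocycle $c\colon G\to\Z$, $c([s_\mu s_\nu^*,x])=\lvert\mu\rvert-\lvert\nu\rvert$, already appearing before the statement. The routine verifications are: $c$ is well defined, since the normal form $s_\mu s_\nu^*$ of a nonzero element of $P_n$ is unique and passing from $(s,x)$ to an equivalent pair $(se,x)$ with $e\in E(P_n)$ multiplies $s_\mu s_\nu^*$ by some $s_\lambda s_\lambda^*$, which either leaves it unchanged or replaces $(\mu,\nu)$ by $(\mu\gamma,\nu\gamma)$, in either case preserving $\lvert\mu\rvert-\lvert\nu\rvert$; $c$ is continuous because it is constant, with value $\lvert\mu\rvert-\lvert\nu\rvert$, on each basic open set $[s_\mu s_\nu^*,C(\nu)]$, and these cover $G$; and $c$ is a cocycle because the normal form of a nonzero product $s_\mu s_\nu^* s_\kappa s_\lambda^*$ has length difference $(\lvert\mu\rvert-\lvert\nu\rvert)+(\lvert\kappa\rvert-\lvert\lambda\rvert)$. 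With this in hand I would check the identity $P_n^m\ltimes_\beta\Sigma_n^{\N}=c^{-1}(m\Z)$: if $[s,x]=[t,x]$ with $t=s_\kappa s_\lambda^*\in P_n^m$ then $c([s,x])=\lvert\kappa\rvert-\lvert\lambda\rvert\in m\Z$, and conversely $c([s_\mu s_\nu^*,x])\in m\Z$ means $s_\mu s_\nu^*\in P_n^m$, so $[s_\mu s_\nu^*,x]\in P_n^m\ltimes_\beta\Sigma_n^{\N}$. Since $m\Z$ is clopen in the discrete group $\Z$ and $c$ is continuous, $H=c^{-1}(m\Z)$ is closed, which finishes the argument.

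I do not expect a serious obstacle here: the substantive content lies in Proposition \ref{inverse subsemigroup of polycyclic monoid} and in the cocycle observation preceding the statement. The only points that need care are the reduction step — checking that an open wide subgroupoid containing $M_n\ltimes_\beta\Sigma_n^{\N}$ really is of the form $P_n^m\ltimes_\beta\Sigma_n^{\N}$, for which the monotonicity $H_1\subset H_2\Rightarrow T_{H_1}\subset T_{H_2}$ together with Theorem \ref{main theorem} is the key input — and the bookkeeping that makes $c$ a well-defined continuous cocycle with $c^{-1}(m\Z)=P_n^m\ltimes_\beta\Sigma_n^{\N}$. I also note that the normality hypothesis on $H$ is not actually used, since Proposition \ref{inverse subsemigroup of polycyclic monoid} already classifies all the relevant subsemigroups $T_H$.
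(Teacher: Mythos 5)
Your proposal is correct and follows essentially the same route as the paper: reduce via Theorem \ref{main theorem} and Proposition \ref{inverse subsemigroup of polycyclic monoid} to the subgroupoids $P_n^m\ltimes_{\beta}\Sigma_n^{\N}$, and then exhibit each as $c^{-1}(m\Z)$ for the continuous cocycle $c\colon P_n\ltimes_{\beta}\Sigma_n^{\N}\to\Z$. Your remark that the normality hypothesis is never needed is also consistent with the paper, whose argument likewise does not invoke it.
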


It follows from Corollary \ref{cor about compact strongly tight action} that $P_n\ltimes_{\beta}\Sigma_{n}^{\N}$ is isomorphic to the tight groupoid of $P_n$.
See Section 5 for more details.


\section{Applications to the theory of C*-algebras}

\subsection{Analysis of Cartan intermediate subalgebras by using inverse semigroups}
In this section,
we explain a correspondence between Cartan intermediate subalgebras and certain subsemigroups of an inverse semigroup.

\begin{defi}
	Let $A$ be a C*-algebra.
	A commutative subalgebra $D\subset A$ is called a Cartan subalgebra if the following conditions hold :
	\begin{enumerate}
		\item The inclusion $D\subset A$ is nondegenerate (i.e.\ $D$ contains an approximate unit for $A$).
		\item The set of normalizers generates $A$, where $n\in A$ is called a normalizer if $nDn^*\cup n^*Dn\subset D$ holds.
		\item There is a faithful conditional expectation $E\colon A\to D$.
		\item The commutant $D'$ coincides with  $D$,
		where $D'\defeq\bigcap_{d\in D}\{a\in A\mid da=ad\}$.
	\end{enumerate}
	We call $(A,D)$ a Cartan pair.
	An intermediate subalgebra $D\subset B \subset A$ is called a Cartan intermediate subalgebra if $D$ is Cartan in $B$.
	
\end{defi}
	

	Renault's cerebrated theorem states that a Cartan pair arises from a twisted groupoid.
	We refer to \cite{Brown:2019aa} and \cite{asims} for twists of \'etale groupoids.
	A twisted groupoid over $G$ is a topological groupoid $\Sigma$ with the central extension
	\[
	G^{(0)}\times \T \hookrightarrow \Sigma \overset{q}{\twoheadrightarrow} G,
	\]
	where $\T$ is the circle group.
	In this paper, this twist is abbreviated to $q\colon \Sigma\to G$.
	We denote the reduced C*-algebra of the twist $q\colon \Sigma\to G$ by $C^*_{\lambda}(\Sigma)$.
	Recall that $C^*_{\lambda}(\Sigma)$ contains $C_0(G^{(0)})$ as a subalgebra.
	We denote the reduced C*-algebra of $G$ by $C^*_{\lambda}(G)$,
	which is isomorphic to the reduced C*-algebra of the trivial twist $G\times \T\to G$.
	
	\begin{thm}[{\cite[Theorem 5.9]{renault}}]
		Let $(A,D)$ be a Cartan pair where $A$ is separable.
		Then there exists a twist $q\colon \Sigma\to G$ such that $A$ is isomorphic to $C^*_{\lambda}(\Sigma)$ via an isomorphism which maps $D$ to $C_0(G^{(0)})$,
		where $G$ is second countable topologically principal locally compact Hausdorff \'etale groupoid.
		This twist $q\colon \Sigma\to G$ is unique up to isomorphism.
	\end{thm}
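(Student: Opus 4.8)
The plan is to realize $(A,D)$ via the \emph{Weyl groupoid} construction of Kumjian and Renault. First I would identify $D$ with $C_0(X)$, where $X$ is the Gelfand spectrum of $D$; this locally compact Hausdorff space serves as the unit space $G^{(0)}$. Given a normalizer $n$ of $D$ in $A$, the elements $n^*n$ and $nn^*$ lie in $D$, and $n$ induces a partial homeomorphism $\alpha_n$ of $X$ whose domain is the open set $\{x\in X\mid (n^*n)(x)>0\}$ and which sends such an $x$ to the character $d\mapsto (n^*dn)(x)/(n^*n)(x)$. One checks that $n\mapsto\alpha_n$ is multiplicative and $*$-preserving on normalizers. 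The groupoid $G$ is then the groupoid of germs: pairs $(n,x)$ with $x$ in the domain of $\alpha_n$, identified when the normalizers agree near $x$ up to a positive element of $D$. The twist $\Sigma$ is built from the same pairs but \emph{without} collapsing the residual $\T$-phase: two normalizers $n,m$ defining the same germ at $x$ differ by the unit scalar $(n^*m)(x)/\lvert(n^*m)(x)\rvert$, and recording this datum yields the central extension $X\times\T\hookrightarrow\Sigma\twoheadrightarrow G$. Topologizing $G$ by declaring the germ sets of fixed normalizers to be open bisections makes $G$ \'etale; separability of $A$ gives second countability; and the hypothesis $D'=D$ is exactly what forces $G$ to be topologically principal, since a normalizer implementing a nontrivial isotropy element over an open set would produce an element of $D'\setminus D$.

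Next I would produce the isomorphism. Each normalizer $n$ determines a continuous function $\widehat{n}$ on $\Sigma$ supported, modulo the $\T$-action, on the bisection given by its germ set; these functions span a dense $*$-subalgebra of $C^*_{\lambda}(\Sigma)$, while the normalizers generate a dense $*$-subalgebra of $A$ by the normalizer axiom for a Cartan subalgebra. The assignment $n\mapsto\widehat{n}$ respects products and adjoints on these dense subalgebras, and under it the conditional expectation $E\colon A\to D$ corresponds to restriction of functions to $G^{(0)}$. Faithfulness of $E$ then matches the $D$-valued inner product $(a,b)\mapsto E(b^*a)$ on $A$ with the one defining the \emph{reduced} twisted C*-algebra, so the map extends to an isometric $*$-isomorphism $A\xrightarrow{\ \sim\ }C^*_{\lambda}(\Sigma)$ carrying $D$ onto $C_0(G^{(0)})$. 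For uniqueness, I would observe that every ingredient of this construction --- the unit space from the spectrum of $D$, the germs from normalizers, the $\T$-phases from the extension --- is recovered intrinsically inside the pair $(A,D)$, so any twist $q'\colon\Sigma'\to G'$ realizing $(A,D)$ is canonically isomorphic to the Weyl twist.

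The main obstacle is the isometry statement above: showing that $n\mapsto\widehat{n}$ not only extends to a bounded $*$-homomorphism but is injective and hits precisely the \emph{reduced} (not full) twisted C*-algebra. This is where the Cartan axioms genuinely intervene --- faithfulness of $E$ prevents collapse and pins down the reduced norm, while $D'=D$, through topological principality, guarantees that $C_0(G^{(0)})$ is its own relative commutant so that no information about $G$ is lost. Everything else is a matter of checking that the germ construction and its \'etale topology behave as expected; I should stress that a complete argument is exactly the content of \cite{renault} and is only sketched here.
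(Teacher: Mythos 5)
The paper does not prove this statement at all: it is imported verbatim as \cite[Theorem 5.9]{renault}, so there is no in-paper argument to compare against. Your sketch is a faithful outline of the standard Kumjian--Renault Weyl-twist construction that the cited reference actually carries out (unit space from the spectrum of $D$, germs of normalizers, the $\T$-phase extension, topological principality from $D'=D$, and the isometric extension of $n\mapsto\widehat{n}$ onto the reduced algebra via the faithful expectation), and you correctly identify the isometry onto the reduced twisted algebra as the step whose full details must be taken from \cite{renault}.
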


	From now on,
	we identify $C^*_{\lambda}(\Sigma)$ and $C_0(G^{(0)})$ with $A$ and $D$ respectively for a Cartan pair $(A,D)$.
	
	Let $q\colon \Sigma\to G$ be a twist and $H\subset G$ be a wide open subgroupoid.
 	\cite[Lemma 3.2]{Brown:2019aa} states that $\Sigma_H\defeq q^{-1}(H)$ naturally becomes a twist over $H$ and there exists a natural inclusion $C^*_{\lambda}(\Sigma_H)\subset C^*_{\lambda}(\Sigma)$.
	The authors in \cite{Brown:2019aa} showed this map $H\mapsto C^*_{\lambda}(\Sigma_H)$ gives a certain correspondence as follows.

	\begin{thm}[{\cite[Theorem 3.3, Lemma 3.4]{Brown:2019aa}}] \label{one to one corr between C* and groupoid}
	Let $(A, D)$ be a Cartan pair with a separable $A$ and $q\colon \Sigma\to G$ be an associated twist.
	Then the above map $H\mapsto C^*_{\lambda}(\Sigma_H)$ gives a one-to-one correspondence between the set of open wide subgroupoids of $G$ and the set of Cartan intermediate subalgebras $D\subset B\subset A$.
	In addition,
	there exists a conditional expectation from $C^*_{\lambda}(\Sigma)$ to $C^*_{\lambda}(\Sigma_H)$ if and only if $H\subset G$ is closed.
	\end{thm}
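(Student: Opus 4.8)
The statement is quoted from \cite{Brown:2019aa}, so the plan is to indicate how it follows from Renault's reconstruction theorem (\cite[Theorem 5.9]{renault}, recalled above) together with a bookkeeping of normalizers and their open supports. Throughout one identifies $A=C^*_{\lambda}(\Sigma)$ and $D=C_0(G^{(0)})$, and recalls that every normalizer $n$ of $D$ in $A$ carries an open bisection $\supp(n)\subset G$, its open support, that these supports are multiplicative in the sense $\supp(nm)=\supp(n)\supp(m)$ and $\supp(n^*)=\supp(n)^{-1}$, and that the bisections $\supp(n)$ form a basis for the topology of $G$. \textbf{From subgroupoids to subalgebras.} Let $H\subset G$ be a wide open subgroupoid. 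By \cite[Lemma 3.2]{Brown:2019aa}, $\Sigma_H=q^{-1}(H)$ is a twist over $H$ and $C^*_{\lambda}(\Sigma_H)\hookrightarrow C^*_{\lambda}(\Sigma)$ canonically (on the level of $C_c$ this is extension of functions by zero off the open set $\Sigma_H$), carrying $C_0(H^{(0)})=D$ identically. First I would note that $H$ inherits from $G$ all the hypotheses of Renault's theorem: being an open subspace it is second countable, locally compact, Hausdorff and \'etale; and it is topologically principal because $H(x)\subset G(x)$ for every $x\in G^{(0)}=H^{(0)}$, so the dense set of units with trivial $G$-isotropy has trivial $H$-isotropy. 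Hence Renault's theorem applied to $H$ shows $(C^*_{\lambda}(\Sigma_H),D)$ is a Cartan pair, i.e.\ $C^*_{\lambda}(\Sigma_H)$ is a Cartan intermediate subalgebra.

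\textbf{From subalgebras to subgroupoids.} Conversely, let $D\subset B\subset A$ be Cartan. Then $B$ is separable, so Renault's theorem realises $(B,D)$ by a twist over some groupoid; the point is to embed that groupoid into $G$ as a wide open subgroupoid, which I would do directly. A normalizer of $D$ in $B$ is in particular a normalizer of $D$ in $A$, so I set
\[
H_B\defeq\bigcup\bigl\{\supp(n)\mid n\text{ is a normalizer of }D\text{ in }B\bigr\}.
\]
This is open (a union of open sets), wide (it contains $G^{(0)}$ because $D\subset B$), and a subgroupoid by multiplicativity of $\supp$. One then checks $C^*_{\lambda}(\Sigma_{H_B})=B$: the inclusion $\supseteq$ is clear since $B$ is the closed linear span of its normalizers of $D$ and each such normalizer lies in $C^*_{\lambda}(\Sigma_{H_B})$; for $\subseteq$, $C^*_{\lambda}(\Sigma_{H_B})$ is the closed linear span of its own normalizers of $D$, each with support inside the open set $H_B$, hence a normalizer of $D$ in $B$.

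\textbf{Mutual inverseness and the conditional expectation.} The two assignments are mutually inverse: for $H\mapsto C^*_{\lambda}(\Sigma_H)$, the normalizers of $D$ in $C^*_{\lambda}(\Sigma_H)$ are exactly the normalizers of $D$ in $A$ whose support lies in $H$, and since the support bisections form a basis their union recovers $H$; the other composite is the identity by the equality $C^*_{\lambda}(\Sigma_{H_B})=B$ above. For the final sentence: if $H$ is closed (hence clopen) then restriction $C_c(\Sigma)\to C_c(\Sigma_H)$, $f\mapsto f|_{\Sigma_H}$, is well defined and extends to a conditional expectation $C^*_{\lambda}(\Sigma)\to C^*_{\lambda}(\Sigma_H)$, which is faithful because composing it with the canonical faithful expectation $C^*_{\lambda}(\Sigma_H)\to D$ returns the canonical faithful expectation $A\to D$. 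Conversely, if a conditional expectation $P\colon A\to C^*_{\lambda}(\Sigma_H)$ exists, composing with the expectation onto $D$ and invoking uniqueness of the expectation of the Cartan pair $(A,D)$ pins $P$ down on matrix coefficients as multiplication by $1_H$; since matrix coefficients of elements of $C^*_{\lambda}(\Sigma)$ are continuous on $G$, this is possible only when $1_H$ is continuous, i.e.\ when $H$ is closed.

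\textbf{Main obstacle.} The technical heart is the faithful dictionary of the second step: verifying that $B\mapsto H_B$ is well defined and inverse to $H\mapsto C^*_{\lambda}(\Sigma_H)$ essentially amounts to re-running Renault's reconstruction relative to the fixed ambient pair $(A,D)$ while tracking open supports of normalizers. The conditional-expectation dichotomy is then a comparatively short addendum, its only delicate point being the extraction of closedness of $H$ from the bare existence of $P$.
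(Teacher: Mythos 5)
First, a point of order: the paper does not prove this theorem at all --- it is imported verbatim from \cite[Theorem 3.3, Lemma 3.4]{Brown:2019aa} and used as a black box, so there is no in-paper proof to compare your argument against. Judged on its own terms, your sketch does follow the general architecture of the cited proof (open supports of normalizers, the subgroupoid $H_B$ built from normalizers of $D$ in $B$, restriction/extension by zero for the expectation), but it has a genuine gap exactly at the step you yourself label the ``technical heart,'' and labelling a step as hard is not the same as doing it. Concretely, in the inclusion $C^*_{\lambda}(\Sigma_{H_B})\subseteq B$ you write that a normalizer of $D$ in $C^*_{\lambda}(\Sigma_{H_B})$ has support inside $H_B$ and is ``hence a normalizer of $D$ in $B$.'' That ``hence'' is unjustified: $H_B$ is a union of supports of normalizers belonging to $B$, but an arbitrary normalizer $m$ of $D$ in $A$ whose support happens to lie in that union is not, by any definition, an element of $B$. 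What is actually needed is the lemma that if $\supp(m)\subseteq\supp(n)$ for a single normalizer $n\in B$, then $mn^*$ is supported in $G^{(0)}$, hence lies in $D$ (using that $D$ is exactly the set of elements supported on the unit space, which itself uses maximal abelianness), so that $m$ can be recovered from $(mn^*)n\in Dn\subseteq B$ by a functional-calculus limit; and then a partition-of-unity argument in $D$ to patch this together when $\supp(m)$ is only covered by finitely or countably many such bisections. Without this, the correspondence is not established.

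Two smaller issues. In the forward direction you invoke ``Renault's theorem applied to $H$'' to conclude that $(C^*_{\lambda}(\Sigma_H),D)$ is Cartan, but the theorem quoted in the paper (the reconstruction theorem) goes from Cartan pairs to twists; the direction you need --- that a twist over a topologically principal \'etale groupoid yields a Cartan pair --- is a separate result and should be cited as such. And in the ``only if'' half of the conditional-expectation claim, the assertion that uniqueness of the expectation onto $D$ ``pins $P$ down on matrix coefficients as multiplication by $1_H$'' is a jump: from $E_H\circ P=E$ one still has to argue that $j(P(a))=j(a)\cdot 1_{\Sigma_H}$ pointwise (by testing against normalizers supported on single bisections) before continuity of $j(a)$ forces $H$ to be closed. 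These are fixable, but as written the proposal establishes neither the bijectivity of the correspondence nor the closedness criterion; it reproduces the statement's skeleton while deferring the load-bearing lemmas.
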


	Combining Theorem \ref{one to one corr between C* and groupoid} with Theorem \ref{main theorem},
	we obtain the next Corollary.

	\begin{cor}
		Let $(A,D)$ be a Cartan pair with separable $A$ and $q\colon \Sigma\to G$ be an associated twist.
		Assume that $G=S\ltimes_{\alpha}X$ holds for some strongly tight action $\alpha\colon S\curvearrowright X$.
		Then there exists a one-to-one correspondence between the set of $\alpha$-join closed wide subsemigroups of $S$ and the set of Cartan intermediate subalgebras $D\subset B\subset A$. 
		More precisely,
		the map $T\mapsto C^*_{\lambda}(\Sigma_{T\ltimes_{\alpha}X})$ gives the above correspondence.
	\end{cor}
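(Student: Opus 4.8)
The plan is to combine the two one-to-one correspondences already established: Theorem \ref{main theorem} identifies the set $\mathcal{T}$ of $\alpha$-join closed wide subsemigroups of $S$ with the set $\mathcal{H}$ of open wide subgroupoids of $G = S\ltimes_\alpha X$ via the mutually inverse maps $T\mapsto T\ltimes_\alpha X$ and $H\mapsto T_H$; and Theorem \ref{one to one corr between C* and groupoid} identifies $\mathcal{H}$ with the set $\mathcal{B}$ of Cartan intermediate subalgebras $D\subset B\subset A$ via $H\mapsto C^*_\lambda(\Sigma_H)$. Composing these, the map $T\mapsto C^*_\lambda(\Sigma_{T\ltimes_\alpha X})$ is a bijection from $\mathcal{T}$ onto $\mathcal{B}$, since a composition of bijections is a bijection.

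First I would check the hypotheses: for Theorem \ref{main theorem} we need $\alpha$ to be strongly tight, which is assumed, and we need $G = S\ltimes_\alpha X$, which is the standing assumption; for Theorem \ref{one to one corr between C* and groupoid} we need $(A,D)$ to be a Cartan pair with $A$ separable and $q\colon\Sigma\to G$ the associated twist, which is also assumed. One should remark that Theorem \ref{one to one corr between C* and groupoid} requires $G$ to be second countable, topologically principal, locally compact Hausdorff and étale; the existence of the associated twist $q\colon\Sigma\to G$ in the statement of Renault's theorem already guarantees these properties, so no extra verification is needed. With both correspondences in force on the same set $\mathcal{H}$, the conclusion is immediate.

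There is essentially no obstacle here — the work has all been done in the two cited theorems, and the corollary is a formal composition. The only mild point worth spelling out is that the two theorems are glued along $\mathcal{H}$ with no compatibility condition to verify: one theorem only uses $\mathcal{H}$ as its target, the other only as its source, so transitivity of bijections finishes the argument. I would therefore write the proof in a single short paragraph: invoke Theorem \ref{main theorem} to get the bijection $\mathcal{T}\to\mathcal{H}$, invoke Theorem \ref{one to one corr between C* and groupoid} to get the bijection $\mathcal{H}\to\mathcal{B}$, and note that their composite is $T\mapsto C^*_\lambda(\Sigma_{T\ltimes_\alpha X})$.
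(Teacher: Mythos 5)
Your proposal is correct and matches the paper's intent exactly: the corollary is stated as an immediate consequence of combining Theorem \ref{main theorem} with Theorem \ref{one to one corr between C* and groupoid}, which is precisely the composition of bijections you describe. No further comment is needed.
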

	
	\begin{ex}
		We investigate certain subalgebras of the Cuntz algebras by using the polycyclic monoids here.
		For $n\in\N$ with $n\geq 2$,
		the Cuntz algebra $O_n$ is the universal unital C*-algebra generated by isometries $S_1,\dots,S_n$ which satisfy Cuntz relation as follows : 
		\[
		S_i^*S_j=\delta_{i,j}1, \sum_{i=1}^nS_iS_i^*=1.
		\]
		For a finite sequence $\mu=(\mu_1,\dots,\mu_l)$ on $\{1,\dots,n\}$,
		we define 
		\[S_{\mu}\defeq S_{\mu_1}S_{\mu_2}\cdots S_{\mu_l}.\]
		Then $O_n$ is the closure of the linear span of $\{S_{\mu}S_{\nu}^*\}_{\mu,\nu}$,
		where $\mu$ and $\nu$ are taken over the all finite sequences on $\{1,\dots,n\}$.
		Let $D_n$ be the subalgebra of $O_n$ generated by $\{S_{\mu}S_{\mu}^*\}_{\mu}$,
		where $\mu$ is taken over the all finite sequences on $\{1,\dots,n\}$.
		We denote the gauge action by $\tau\colon\T\curvearrowright O_n$.
		Note that the gauge action satisfies $\tau_z(S_i)=zS_i$ for all $z\in \T$ and $i=1,2,\dots,n$.
		We denote the fixed point algebra of $\tau$ by
		\[O_{n}^{\tau}\defeq \bigcap_{z\in\T}\{x\in O_n\mid \tau_z(x)=x\}.\]
		Then $O_{n}^{\tau}$ is the closure of the linear span of
		\[
		\{S_{\mu}S_{\nu}^*\in O_n\mid \lvert\mu\rvert=\lvert\nu\rvert\},
		\]
		where $\lvert\mu\rvert$ denotes the length of $\mu$.

		The polycyclic monoids have strongly tight actions $\beta\colon P_n\curvearrowright \Sigma_n^{\N}$,
		described in subsection \ref{subsection of Polycyclic monoids}.
		Put $G_n\defeq P_n\ltimes_{\beta}\Sigma_n^{\N}$.
		Then $G_n$ is a topologically principal locally compact Hausdorff second countable ample groupoid.
		For $s_i\in P_n$,
		let $\chi_{[s_i,D_{s_i^*s_i}]}$ denote the characteristic function on $[s_i,D_{s_i^*s_i}]\subset G_n$.
		Then $\{\chi_{[s_i,D_{s_i^*s_i}]}\}_{i=1}^n$ are elements of $C^*_{\lambda}(G_n)$ and generate $C^*_{\lambda}(G_n)$.
		Since $\{\chi_{[s_i,D_{s_i^*s_i}]}\}_{i=1}^n$ satisfies the Cuntz relation,
		$O_n$ and $C^*_{\lambda}(G_n)$ are isomorphic via the unique isomorphism $\Phi\colon O_n\to C^*_{\lambda}(G_n)$ such that $\Phi(S_i)=\chi_{[s_i,D_{s_i^*s_i}]}$ holds for all $i=1,\dots,n$.		One can see that 
		\[\Phi(D_n)=C(\Sigma_n^{\N}) \text{ and } \Phi(O_n^{\tau})= C^*_{\lambda}(M_n\ltimes_{\beta}\Sigma_n^{\N})\]
		hold.
		Define $O_n^m\subset O_n$ to be the subalgebra generated by
		\[
		\{S_{\mu}S_{\nu}^*\in O_n\mid \lvert\mu\rvert-\lvert\nu\rvert\in m\Z\}.
		\]
		One can see that \[\Phi(O_n^m)=C^*_{\lambda}(P_n^m\ltimes_{\beta}\Sigma_n^{\N})\]
		holds.
		Therefore it follows from Proposition \ref{inverse subsemigroup of polycyclic monoid} that a Cartan intermediate subalgebra $O_n^{\tau}\subset B\subset O_n$ coincides with $O_n^m$ for some $m\in \N$.
		Moreover,
		every Cartan intermediate subalgebra between $O_n^{\tau}$ and $O_n$ admits a conditional expectation from $O_n$ by Proposition \ref{closedness of subsemigroup of Pn} and Theorem \ref{one to one corr between C* and groupoid}.
		
		We note that $O_n^m$ is isomorphic to $O_{n^m}$.
		Indeed, $\{S_{\mu}\}_{\lvert\mu\rvert =m}$ generates $O_n^m$ and satisfies the Cuntz relation.
	\end{ex}


\section{Relation between strongly tight actions and tight groupoids}\label{section about characterization of strongly tight action}

In this section,
we observe that tight groupoids, which are investigated in \cite{ExelPardo2016}, are related with strongly tight actions.

\subsection{Tight groupoids}

First we recall the definition of tight groupoids.
Refer to \cite{ExelcombinatrialC*algebra} or \cite{ExelPardo2016} for more details.
Let $S$ be an inverse semigroup.
A character on $E(S)$ is a nonzero semigroup homomorphism from $E(S)$ to $\{0,1\}$,
where $\{0,1\}$ is equipped with the usual multiplication.
We denote the set of all characters on $E(S)$ by $\widehat{E}(S)$.
Letting $\widehat{E}(S)$ be equipped with the pointwise convergence topology,
$\widehat{E}(S)$ is a locally compact Hausdorff space.
For a $\xi\in\widehat{E}(S)$,
$\xi^{-1}(\{1\})\subset E(S)$ is a proper filter in the following sense :
\begin{enumerate}
\item $\xi^{-1}(\{1\})$ does not contain $0$,
\item 	if $e$ and $f$ belongs to $\xi^{-1}(\{1\})$, then $ef$ also belongs to $\xi^{-1}(\{1\})$,
\item  if $e\in\xi^{-1}(\{1\})$ and $f\geq e$ hold, then $f$ belongs to $\xi^{-1}(\{1\})$.
\end{enumerate}

A character $\xi\in \widehat{E}(S)$ is called an ultracharacter if $\xi^{-1}(\{1\})$ is a maximal proper filter.
A character $\xi\in \widehat{E}(S)$ is an ultracharacter if and only if there is no character $\eta\in\widehat{E}(S)$ such that $\xi<\eta$ holds.
The set of all ultracharacters on $E(S)$ is denoted by $\widehat{E}_{\infty}(S)$.
The closure of $\widehat{E}_{\infty}(S)$ in $\widehat{E}(S)$ is denoted by $\widehat{E}_{\tight}(S)$.
An element in $\widehat{E}_{\tight}(S)$ is called a tight character.

We define the spectral action $\beta\colon S\curvearrowright \widehat{E}(S)$.
For $e\in E(S)$,
put \[ D_e^{\beta}\defeq\{\xi\in\widehat{E}(S)\mid \xi(e)=1\}.\]
Note that $D_e^{\beta}$ is a compact open set of $\widehat{E}(S)$.
For $s\in S$ and $\xi\in D_{s^*s}^{\beta}$,
define $\beta_s(\xi)\in D_{ss^*}^{\beta}$ by
\[
\beta_s(\xi)(e)\defeq \xi(s^*es), e\in E(S).
\]
Then $\beta_s\colon D_{s^*s}^{\beta}\to D_{ss^*}^{\beta}$ is a homeomorphism.
The map $s\mapsto \beta_s$ defines an action $\beta\colon S\curvearrowright \widehat{E}(S)$.
It is known that $\widehat{E}_{\infty}(S)$ and $\widehat{E}_{\tight}(S)$ are $\beta$-invariant (see \cite[Proposition 12.11]{ExelcombinatrialC*algebra}).
The restrictions of $\beta$ to $\widehat{E}_{\infty}(S)$ and $\widehat{E}_{\tight}(S)$ are denoted by
\[
\theta_{\infty}\colon S\curvearrowright \widehat{E}_{\infty}(S) \text{ and } \theta\colon S\curvearrowright \widehat{E}_{\tight}(S)
\]
respectively.
The tight groupoid of $S$ is defined as $G_{\tight}(S)\defeq S\ltimes_{\theta} \widehat{E}_{\tight}(S)$.

\subsection{Characterization of tight groupoids}

Strongly tight actions with nonempty domains are characterized as the following theorem.

\begin{thm}\label{characterization of strongly tight action}
	Let $S$ be an inverse semigroup,
	$X$ be a locally compact Hausdorff space and $\alpha\colon S\curvearrowright X$ be a strongly tight action such that $D_e\not=\emptyset$ holds for each $e\in E(S)\setminus\{0\}$.
	Then there exists a homeomorphism  
	\[X\ni x\mapsto \xi_x\in\widehat{E}_{\infty}(S)\]
	which induces an isomorphism
	\[S\ltimes_{\alpha}X\ni [s,x]\mapsto [s,\xi_x]\in S\ltimes_{\theta_{\infty}}\widehat{E}_{\infty}(S).\]
\end{thm}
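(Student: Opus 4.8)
The plan is to construct the map $x \mapsto \xi_x$ directly, prove it is a homeomorphism onto $\widehat{E}_\infty(S)$, check it is equivariant for the two actions, and then invoke that both groupoids are constructed from equivariant data in the same way. First I would define, for $x \in X$, the function $\xi_x \colon E(S) \to \{0,1\}$ by $\xi_x(e) = 1$ if $x \in D_e^\alpha$ and $\xi_x(e) = 0$ otherwise. That $\xi_x$ is a semigroup homomorphism is immediate from $D_{ef}^\alpha = D_e^\alpha \cap D_f^\alpha$, and it is nonzero because $\{D_e^\alpha\}_{e \in E(S)}$ covers $X$; so $\xi_x \in \widehat{E}(S)$. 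Continuity of $x \mapsto \xi_x$ is clear since each coordinate map $x \mapsto \xi_x(e)$ is the indicator of the clopen set $D_e^\alpha$.

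Next I would show $\xi_x$ is an ultracharacter. The filter $\xi_x^{-1}(\{1\}) = \{e \in E(S) \mid x \in D_e^\alpha\}$ is proper since $\xi_x(0) = 0$. Suppose $\eta \geq \xi_x$; I must show $\eta = \xi_x$. If $\eta(f) = 1$ for some $f$ with $x \notin D_f^\alpha$, then by strong tightness there is $e \in E(S)$ with $x \in D_e^\alpha$ and $D_e^\alpha \cap D_f^\alpha = \emptyset$, whence $D_{ef}^\alpha = \emptyset$, so $ef = 0$ (using the hypothesis that $D_g^\alpha \neq \emptyset$ for $g \neq 0$). But $\eta(e) \geq \xi_x(e) = 1$ and $\eta(f) = 1$ force $\eta(ef) = 1 \neq 0 = \eta(0)$, a contradiction. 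Hence $\xi_x$ is maximal, i.e.\ $\xi_x \in \widehat{E}_\infty(S)$.

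Surjectivity onto $\widehat{E}_\infty(S)$ is the step I expect to be the main obstacle. Given $\xi \in \widehat{E}_\infty(S)$, the family $\{D_e^\alpha \mid \xi(e) = 1\}$ is a family of nonempty compact sets closed under finite intersections (an ultracharacter's filter is closed under products and omits $0$, and $D_g^\alpha \neq \emptyset$ for $g \neq 0$), so by compactness it has a point $x$ in its intersection. Then $\xi_x(e) = 1$ whenever $\xi(e) = 1$, i.e.\ $\xi \leq \xi_x$; since $\xi$ is an ultracharacter, $\xi = \xi_x$. I would also note the map is injective: if $\xi_x = \xi_y$ with $x \neq y$, Hausdorffness and the basis property give $e, f$ with $x \in D_e^\alpha$, $y \in D_f^\alpha$, $D_e^\alpha \cap D_f^\alpha = \emptyset$; but then $\xi_x(e) = \xi_y(e) = 1$ and $\xi_x(f) = 1$, forcing $x \in D_e^\alpha \cap D_f^\alpha$, a contradiction. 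A continuous bijection between locally compact Hausdorff spaces need not be a homeomorphism in general, so I would argue openness directly: $x \mapsto \xi_x$ sends the basic open set $D_e^\alpha$ onto $D_e^{\theta_\infty} = D_e^\beta \cap \widehat{E}_\infty(S)$, since $\xi_x(e) = 1 \iff x \in D_e^\alpha$; as these sets form bases on both sides, the map is open, hence a homeomorphism.

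Finally I would check equivariance: for $s \in S$ and $x \in D_{s^*s}^\alpha$, one computes $\xi_{\alpha_s(x)}(e) = 1 \iff \alpha_s(x) \in D_e^\alpha \iff x \in \alpha_s^{-1}(D_e^\alpha) = D_{s^*es}^\alpha \iff \xi_x(s^*es) = 1 \iff \beta_s(\xi_x)(e) = 1$, so $\xi_{\alpha_s(x)} = \theta_{\infty,s}(\xi_x)$. Since $x \mapsto \xi_x$ is an equivariant homeomorphism intertwining $\alpha$ and $\theta_\infty$ and carrying $D_{s^*s}^\alpha$ to $D_{s^*s}^{\theta_\infty}$, it induces a well-defined bijection $[s,x] \mapsto [s,\xi_x]$ on the germ groupoids (the equivalence relations defining $S \ltimes_\alpha X$ and $S \ltimes_{\theta_\infty} \widehat{E}_\infty(S)$ correspond under it, because $se = te$ on a neighborhood of $x$ translates to $se = te$ on a neighborhood of $\xi_x$), and this bijection is a homeomorphism because it carries the basic open set $[s, D_e^\alpha]$ onto $[s, D_e^{\theta_\infty}]$. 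It is a groupoid homomorphism since multiplication and inversion are defined by the same formulas in terms of $s, t$ and the base point, completing the proof.
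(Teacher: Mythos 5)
Your proposal is correct and follows essentially the same route as the paper's proof: the same definition of $\xi_x$, the same contradiction argument (via strong tightness and $D_{ef}^\alpha=\emptyset\Rightarrow ef=0$) showing $\xi_x$ is an ultracharacter, and the same compactness/finite-intersection argument for surjectivity. You simply spell out in more detail the openness of the map and the equivariance/induced groupoid isomorphism, which the paper leaves as a straightforward check.
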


\begin{proof}
	This is a simple modification of \cite[Proposition 5.5]{STEINBERG2010689}.
	We give a proof for the reader's convenience.
	
	For $x\in X$,
	we define $\xi_x\in\widehat{E}(S)$ by
	\begin{align*}
	\xi_x(e)\defeq
	\begin{cases}
	1 & (x\in D^{\alpha}_e), \\
	0 & (x\not\in D^{\alpha}_e).
	\end{cases}
	\end{align*}
	We show $\xi_x\in\widehat{E}_{\infty}(S)$.
	Assume that there exists $\eta\in\widehat{E}(S)$ such that $\xi_x< \eta$.
	Then there exists $f\in E(S)$ such that $\xi_x(f)=0$ and $\eta(f)=1$.
	Since we assume that $\{D^{\alpha}_e\}_{e\in E(S)}$ is a basis of $X$,
	there exists $e\in E(S)$ such that $x\in D^{\alpha}_e$ and $D^{\alpha}_e\cap D^{\alpha}_f=\emptyset$.
	By the assumption $\xi_x< \eta$, we have $\eta(e)=1$.
	By $D^{\alpha}_{ef}=D^{\alpha}_e\cap D^{\alpha}_f=\emptyset$,
	we have $ef=0$ and therefore $\eta(ef)=0$.
	This contradicts to $\eta(ef)=\eta(e)\eta(f)=1$.
	Hence $\xi_x$ is an ultracharacter.
	
	We define the map $\Phi\colon X\ni x\mapsto \xi_x\in\widehat{E}_{\infty}(S)$.
	We show that $\Phi$ is a homeomorphism. 
	It is easy to show that $\Phi$ is continuous.
	To show that $\Phi$ is injective,
	take $x,y\in X$ with $x\not=y$.
	Since a family $\{D^{\alpha}_e\}_{e\in E(S)}$ is a basis of $X$,
	there exists $e\in E(S)$ such that $x\in D_e^{\alpha}$ and $y\not\in D_e^{\alpha}$.
	Then $\xi_x(e)=1$ and $\xi_y(e)=0$.
	Therefore we have $\xi_x\not=\xi_y$ and $\Phi$ is injective.
	
	Next, take $\xi\in\widehat{E}_{\infty}(S)$ to show that $\Phi$ is surjective.
	Because a family $\{D^{\alpha}_e\mid \xi(e)=1\}$ has the finite intersection property,
	$\bigcap_{\xi(e)=1}D^{\alpha}_e$ is not empty.
	Take $x\in\bigcap_{\xi(e)=1}D^{\alpha}_e$.
	Then we have $\xi\leq \xi_x$.
	By the maximality of $\xi$,
	we obtain $\xi=\xi_x$.
	Therefore the map $x\mapsto \xi_x$ is surjective.
	
	Now one can check that $\Phi(D_e^{\alpha})=D_e^{\beta}$ holds.
	Using this, it follows that $\Phi$ is a homeomorphism.
	
	It is straightforward to check that there exists a (unique) isomorphism which maps
	$[s,x]\in S\ltimes_{\alpha}X$ to $[s,\xi_x]\in S\ltimes_{\theta_{\infty}}\widehat{E}_{\infty}(S)$.
	\qed
\end{proof}


\begin{rem}
	It seems difficult to drop the assumption that $D_e\not=\emptyset$ for $e\in E(S)\setminus \{0\}$.
	Define matrices
	\begin{align*}
	p=
	\begin{pmatrix}
	1 & 0 \\
	0 & 0
	\end{pmatrix},
	q=
	\begin{pmatrix}
	0 & 0 \\
	0 & 1
	\end{pmatrix}
	.
	\end{align*}
	Then $E\defeq\{0,p,q,1\} $ is a semilattice with respect to the usual multiplication.
	Let $X=\{x\}$ be a singleton.
	Define an action $\alpha\colon E\curvearrowright X$ by declaring $D_1=X$ and  $D_p=D_q=D_0=\emptyset$.
	Then $\alpha$ is strongly tight,
	although $\widehat{E}_{\infty}$ is not homeomorphic to $X$.
	Note that $\xi_x$, which is defined in the proof of Theorem \ref{characterization of strongly tight action}, is not an ultracharacter.
	Therefore it seems difficult to find a natural map between $X$ and $\widehat{E}_{\infty}$.
\end{rem}

The author in \cite{Lawson:2010aa} showed the following theorem.
\begin{thm}[{\cite[Theorem 2.5]{Lawson:2010aa}}]\label{compactable condition}
	Let $E$ be a semilattice with zero and unit elements.
	Then $\widehat{E}_{\infty}=\widehat{E}_{\tight}$ holds if and only if $\widehat{E}_{\infty}$ is compact.
\end{thm}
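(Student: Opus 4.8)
The plan is to reduce the statement to a single topological observation: the hypothesis that $E$ has a unit forces the ambient character space $\widehat{E}$ to be \emph{compact} (it is automatically Hausdorff). First I would record that every character $\xi\in\widehat{E}$ satisfies $\xi(1)=1$, since $\xi(1)=0$ would give $\xi(e)=\xi(e\cdot 1)=\xi(e)\xi(1)=0$ for all $e\in E$, contradicting $\xi\neq 0$. Hence $\widehat{E}$ coincides with $D_1^{\beta}=\{\xi\in\widehat{E}\mid \xi(1)=1\}$, which is compact open in $\widehat{E}$; equivalently, the defining conditions on a character (being a semigroup homomorphism, sending $0$ to $0$, and sending $1$ to $1$) are all closed conditions on $\{0,1\}^{E}$, so $\widehat{E}$ is a closed subspace of the compact product $\{0,1\}^{E}$ and is therefore compact.

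Granting compactness of $\widehat{E}$, the equivalence is essentially formal. By construction $\widehat{E}_{\tight}$ is the closure of $\widehat{E}_{\infty}$ inside $\widehat{E}$, so $\widehat{E}_{\infty}=\widehat{E}_{\tight}$ holds exactly when $\widehat{E}_{\infty}$ is closed in $\widehat{E}$. For the forward direction, if $\widehat{E}_{\infty}=\widehat{E}_{\tight}$ then $\widehat{E}_{\infty}$ is a closed subset of the compact space $\widehat{E}$, hence compact. For the converse, if $\widehat{E}_{\infty}$ is compact then, being a compact subset of the Hausdorff space $\widehat{E}$, it is closed, so $\widehat{E}_{\tight}=\overline{\widehat{E}_{\infty}}=\widehat{E}_{\infty}$.

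I do not expect a genuine obstacle: the entire content sits in the compactness of $\widehat{E}$, and that is precisely where the hypotheses enter — the unit element supplies compactness, while the zero element is what makes the filter/ultrafilter description of $\widehat{E}_{\infty}$ and $\widehat{E}_{\tight}$ available in the first place. In a semilattice without a unit, $\widehat{E}$ is only locally compact, "closed" and "compact" diverge for subspaces, and the equivalence fails, so the one point requiring care is simply not to lose track of the role of $1$.
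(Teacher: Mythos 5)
Your argument is correct. Note, however, that the paper offers no proof of this statement at all --- it is quoted verbatim from Lawson's work (\cite[Theorem 2.5]{Lawson:2010aa}) --- so there is no in-paper argument to compare against; what you have written is a valid self-contained replacement. The two pivots are exactly right: the unit forces $\xi(1)=1$ for every character, so $\widehat{E}=D^{\beta}_{1}$ is a closed subset of $\{0,1\}^{E}$ and hence compact (here one should recall that the definition of a character implicitly includes $\xi(0)=0$, which is what makes the ``nonzero'' requirement a closed rather than open condition once $\xi(1)=1$ is imposed); and since $\widehat{E}_{\tight}$ is by definition the closure of $\widehat{E}_{\infty}$ in $\widehat{E}$, the equivalence reduces to ``closed $\Leftrightarrow$ compact'' inside a compact Hausdorff space. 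Your closing remark about where each hypothesis enters is also consistent with the paper's own Remark \ref{ultra filter is tight but not compact} and Corollary \ref{cor about compact strongly tight action}: the implication ``compact $\Rightarrow$ closed $\Rightarrow$ tight'' survives without a unit (only Hausdorffness is used), while the forward implication genuinely needs compactness of the ambient $\widehat{E}$ and hence the unit.
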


Theorem \ref{characterization of strongly tight action} and Theorem \ref{compactable condition} yield the following characterization of tight groupoids.

\begin{cor}\label{cor about compact strongly tight action}
	Let $S$ be an inverse semigroup.
	Consider the following conditions.
	\begin{enumerate}
		\item $S$ has a strongly tight action to a compact Hausdorff set $X$.
		\item $\widehat{E}(S)_{\infty}$ is compact,
		\item $\widehat{E}(S)_{\tight}=\widehat{E}(S)_{\infty}$.
	\end{enumerate}
	Then $(1) \Leftrightarrow (2)$ and $(2) \Rightarrow (3)$ hold.
	If $S$ has a unit element,
	$(3) \Rightarrow (2)$ also holds.
	Moreover,
	if (1) holds,
	then $S\ltimes X$ is isomorphic to $G_{\tight}(S)$.
\end{cor}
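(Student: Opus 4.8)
The plan is to prove $(1)\Rightarrow(2)$, $(2)\Rightarrow(3)$, $(2)\Rightarrow(1)$, and, when $S$ is a monoid, $(3)\Rightarrow(2)$, and then to read off the concluding assertion. Throughout, condition $(1)$ is understood to furnish a strongly tight action $\alpha\colon S\curvearrowright X$ with $X$ compact Hausdorff \emph{and} $D_e^{\alpha}\neq\emptyset$ for every $e\in E(S)\setminus\{0\}$: this is the hypothesis of Theorem \ref{characterization of strongly tight action}, it is satisfied in the intended application to the polycyclic monoids, and it cannot be dropped (a countable semilattice of pairwise orthogonal nonzero idempotents without unit has a noncompact $\widehat{E}_{\infty}$, yet admits a strongly tight action on a one-point space). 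Under this reading, Theorem \ref{characterization of strongly tight action} provides a homeomorphism $X\cong\widehat{E}_{\infty}(S)$ together with an isomorphism of \'etale groupoids $S\ltimes_{\alpha}X\cong S\ltimes_{\theta_{\infty}}\widehat{E}_{\infty}(S)$; compactness of $X$ then forces $\widehat{E}_{\infty}(S)$ to be compact, which is $(2)$, and, once $(2)\Rightarrow(3)$ is in hand, the equality $\widehat{E}_{\tight}(S)=\widehat{E}_{\infty}(S)$ turns the right-hand groupoid into $S\ltimes_{\theta}\widehat{E}_{\tight}(S)=G_{\tight}(S)$, yielding the final claim $S\ltimes_{\alpha}X\cong G_{\tight}(S)$.

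For $(2)\Rightarrow(3)$: the space $\widehat{E}(S)$ is Hausdorff, so the compact subset $\widehat{E}_{\infty}(S)$ is closed in it and hence coincides with its closure $\widehat{E}_{\tight}(S)$. For $(3)\Rightarrow(2)$ when $S$ has a unit: then $E(S)$ is a semilattice with a zero (our standing convention) and with a unit, so Theorem \ref{compactable condition} applies and states that $\widehat{E}_{\infty}(S)=\widehat{E}_{\tight}(S)$ is equivalent to compactness of $\widehat{E}_{\infty}(S)$; hence $(3)$ implies $(2)$.

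The implication carrying real content is $(2)\Rightarrow(1)$. Here I would take $X\defeq\widehat{E}_{\infty}(S)$ and $\alpha\defeq\theta_{\infty}$: this space is compact by $(2)$ and Hausdorff as a subspace of $\widehat{E}(S)$, so it remains to check that $\theta_{\infty}$ is strongly tight and that $D_e^{\theta_{\infty}}\neq\emptyset$ for $e\neq0$. Nonvanishing is immediate: for $e\neq0$ the principal filter $\{f\in E(S)\mid f\geq e\}$ is proper, hence lies inside an ultrafilter by Zorn's lemma, whose ultracharacter belongs to $D_e^{\theta_{\infty}}$. For the basis property, fix $\xi\in\widehat{E}_{\infty}(S)$ and a basic open neighbourhood of $\xi$ in $\widehat{E}(S)$, which we may take of the form $D_{e_0}^{\beta}\setminus(D_{f_1}^{\beta}\cup\dots\cup D_{f_m}^{\beta})$ with $\xi(e_0)=1$ and $\xi(f_j)=0$ for all $j$; since $\xi^{-1}(\{1\})$ is a maximal proper filter containing none of the $f_j$, the standard criterion for maximality of filters in a semilattice with zero yields $g_j\in\xi^{-1}(\{1\})$ with $g_jf_j=0$, and then $e\defeq e_0g_1\cdots g_m$ satisfies $\xi(e)=1$, $e\leq e_0$, and $ef_j=0$ for every $j$, so $D_e^{\beta}\cap\widehat{E}_{\infty}(S)$ sits inside the chosen neighbourhood. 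I expect this verification to be the only genuine obstacle: it relies on the fact that a proper filter $F$ of a semilattice with zero is maximal if and only if for each $f$ either $f\in F$ or $gf=0$ for some $g\in F$, together with the ensuing device of shrinking $e_0$ to $e_0g_1\cdots g_m$. Everything else is routine bookkeeping on top of Theorems \ref{characterization of strongly tight action} and \ref{compactable condition} and the Hausdorffness of $\widehat{E}(S)$.
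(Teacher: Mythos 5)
Your proof is correct and follows the route the paper intends: the paper states this corollary without proof as a consequence of Theorems \ref{characterization of strongly tight action} and \ref{compactable condition}, and your argument is exactly that derivation, with the missing details supplied --- in particular the verification for $(2)\Rightarrow(1)$ that the spectral action $\theta_{\infty}$ on $\widehat{E}_{\infty}(S)$ is strongly tight (via the ultrafilter criterion ``$f\notin\xi^{-1}(\{1\})$ implies $gf=0$ for some $g\in\xi^{-1}(\{1\})$''), which is the only step with real content. Your observation that condition $(1)$ must be read as including $D_e^{\alpha}\neq\emptyset$ for $e\in E(S)\setminus\{0\}$ is also correct and worth recording: your example of infinitely many pairwise orthogonal idempotents acting on a point shows that without this hypothesis $(1)$ holds while $(2)$ fails, so the corollary as literally stated needs this reading, which is anyway the hypothesis under which Theorem \ref{characterization of strongly tight action} applies.
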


\begin{rem}\label{ultra filter is tight but not compact}
	The implication $(3)\Rightarrow(2)$ in Corollary \ref{cor about compact strongly tight action} dose not necessarily hold in general.
	Let $E$ be a semilattice generated by $0$ and $\{p_i\}_{i\in \N}$ with the relation
	\begin{align*}
	p_ip_j=
	\begin{cases}
	p_i & (i=j),\\
	0 & (i\not=j).
	\end{cases}
	\end{align*}
	Then $\widehat{E}_{\infty}=\widehat{E}_{\tight}$ holds,
	although $\widehat{E}_{\infty}$ is not compact.
	Indeed $\widehat{E}_{\infty}$ is homeomorphic to $\N$.
\end{rem}

\begin{rem}
	There exists a semilattice $E$ such that $\widehat{E}_{\infty}$ is a locally compact although $\widehat{E}_{\infty}\subsetneq \widehat{E}_{\tight}$ holds.
	Let $E$ be the semilattice in Remark \ref{ultra filter is tight but not compact}.
	Put $E^1\defeq E\cup\{1\}$.
	Then $\widehat{E^1}_{\infty}$ is locally compact.
	In addition we have $\widehat{E^1}_{\infty}\subsetneq \widehat{E^1}_{\tight}$.
	Indeed, $\widehat{E^1}_{\infty}$ and $\widehat{E^1}_{\tight}$ are isomorphic to $\N$ and $\N\cup\{\infty\}$ respectively.
	Therefore we can not relax the condition (2) in Corollary \ref{cor about compact strongly tight action}.
\end{rem}

\bibliographystyle{plain}
\bibliography{bunken}

\end{document}